\title[Automorphisms of the Affine Cremona Group]{On Automorphisms of the Affine Cremona Group}
\author{Hanspeter Kraft and Immanuel Stampfli}
\address{Mathematisches Institut,
Universit\"at Basel, Rheinsprung 21, CH-4051 Basel}
\email{Hanspeter.Kraft@unibas.ch}
\email{Immanuel.Stampfli@unibas.ch}
\date{Final version May 2013}
\thanks{Both authors were partially supported by  Swiss National Science Foundation (Schweizerischer National\-fonds), grant ``Automorphisms of Affine
$n$-Space 137679}
\newtheorem*{mthm}{Main Theorem}
\newtheorem{prop}[subsubsection]{Proposition}
\newtheorem{lem}[subsubsection]{Lemma}
\newtheorem{cor}[subsubsection]{Corollary}
\theoremstyle{definition}
\theoremstyle{remark}
\newtheorem{rem}[subsubsection]{Remark}
\newcommand{\name}[1]{\textsc{#1\/}}
\newcommand{\NN}{{\mathbb N}}
\newcommand{\ZZ}{{\mathbb Z}}
\newcommand{\PP}{{\mathbb P}}
\newcommand{\CC}{{\mathbb C}}
\newcommand{\QQ}{{\mathbb Q}}
\newcommand{\Cst}{{{\mathbb C}^*}}
\renewcommand{\AA}{{\mathbb A}}
\newcommand{\An}{{\mathbb A}^{n}}
\newcommand{\Gn}{{\mathcal G}_{n}}
\newcommand{\TGn}{T{\mathcal G}_{n}}
\newcommand{\Gtwo}{{\mathcal G}_{2}}
\newcommand{\Gthree}{{\mathcal G}_{3}}
\newcommand{\TGtwo}{T{\mathcal G}_{2}}
\newcommand{\Jn}{{\mathcal J}_{n}}
\newcommand{\TTT}{\mathcal T}
\newcommand{\Tn}{{\mathcal T}_{n}}
\newcommand{\simto}{\xrightarrow{\sim}}
\newcommand{\be}{\begin{enumerate}}
\newcommand{\ee}{\end{enumerate}}
\newcommand{\eps}{\varepsilon}
\DeclareMathOperator{\id}{id}
\DeclareMathOperator{\Aff}{Aff}
\DeclareMathOperator{\C}{Cent}
\DeclareMathOperator{\GL}{GL}
\DeclareMathOperator{\SL}{SL}
\DeclareMathOperator{\Aut}{Aut}
\DeclareMathOperator{\Lie}{Lie}
\DeclareMathOperator{\Id}{Id}
\DeclareMathOperator{\Exp}{Exp}
\newcommand{\g}{\mathbf g}
\newcommand{\f}{\mathbf f}
\newcommand{\bd}{\mathbf d}
\newcommand{\bt}{\mathbf t}
\newcommand{\mm}{\mathfrak m}
\newcommand{\OOO}{\mathcal O}
\newcommand{\Cplus}{{\CC^{+}}}
\newcommand{\Xu}{X_{u}(D_{n})}
\renewcommand{\phi}{\varphi}
\begin{document}

\begin{abstract}
We show that every automorphism of the group $\Gn:=\Aut(\An)$ of polynomial automorphisms of 
complex affine $n$-space $\An=\CC^{n}$ 
is inner up to field automorphisms when restricted to the subgroup $\TGn$ of tame automorphisms. 
This generalizes a result of \name{Julie Deserti} who proved this in 
dimension $n=2$ where all automorphisms are tame: $\TGtwo = \Gtwo$. The methods are different, based on
arguments from algebraic groups actions.
\end{abstract}

\maketitle

\subsection{Notation}
Let $\Gn:=\Aut(\An)$ denote the group of polynomial automorphisms of complex affine $n$-space $\An = \CC^{n}$. For an  automorphism $\g$ we use the notation  $\g=(g_{1},g_{2},\ldots,g_{n})$ if 
$$
\g(a) = (g_{1}(a_{1},\ldots,a_{n}),\ldots,g_{n}(a_{1},\ldots,a_{n}))\quad\text{for\ } a=(a_{1},\ldots,a_{n})\in \An
$$ 
where $g_{1},\ldots,g_{n}\in\CC[x_{1},\ldots,x_{n}]$. Moreover, we define the degree of $\g$ by $\deg \g := \max(\deg g_{1},\ldots,\deg g_{n})$. The product of two automorphisms is denoted by $\f \circ \g$.

The automorphisms of the form $(g_{1},\ldots,g_{n})$ where $g_{i}= g_{i}(x_{i},\ldots,x_{n})$ depends only on $x_{i},\ldots,x_{n}$,  form the {\it Jonqui\`ere subgroup} $\Jn \subset \Gn$. Moreover, we have the inclusions $D_{n}\subset \GL_{n}\subset \Aff_{n}\subset \Gn$ where $D_{n}$ is the group of  {\it diagonal automorphisms\/} $(a_{1}x_{1},\ldots,a_{n}x_{n})$ and $\Aff_{n}$ is the group of {\it affine transformations\/} $\g=(g_{1},\ldots,g_{n})$ where all $g_{i}$ have degree 1. The group $\Aff_{n}$ is the semidirect product of $\GL_{n}$ with the commutative unipotent subgroup $\Tn$ of translations. The subgroup $\TGn\subset\Gn$ generated by $\Jn$ and $\Aff_{n}$ is called the group of {\it tame\/} automorphisms.

\begin{mthm}
Let $\theta$ be an automorphism of $\Gn$. Then there is an element $\g\in\Gn$ and a field automorphism $\tau\colon\CC \to\CC$ such that 
$$
\theta (\f) = \tau(\g\circ \f \circ \g^{-1})\text{ for all tame automorphisms } \f\in\TGn.
$$
\end{mthm}
After some preparation in the following sections the proof is given in Section~\ref{proof.subsec}. For $n=2$ where $\TGtwo=\Gtwo$ this result is due to \name{Julie Deserti} \cite{De2006Sur-le-groupe-des-}. In fact, she proved this for any uncountable field $K$ of characteristic zero. Our methods work for any algebraically closed field of characteristic zero.

Recently, \name{Alexei Belov-Kanel} and \name{Jie-Tai Yu} proved that every ind-group automorphism $\Gn \to \Gn$ is inner \cite{BeYu2013On-The-Zariski-Top}.

\par\medskip
\subsection{Ind-group structure and locally finite automorphisms}
The group $\Gn$ has the structure of an ind-group given by $\Gn = \bigcup_{d\geq 1}(\Gn)_{d}$ where $(\Gn)_{d}$ are the automorphisms of degree $\leq d$ (see \cite{Ku2002Kac-Moody-groups-t}).  Each $(\Gn)_{d}$ is an affine variety and $(\Gn)_{d}\subset (\Gn)_{d+1}$ is closed for all $d$. This defines a topology on $\Gn$ where a subset $X \subset \Gn$ is closed (resp. open) if and only if $X \cap (\Gn)_{d}$ is closed (resp. open) in $(\Gn)_{d}$ for all $d$. All subgroups mentioned above are closed subgroups. 

In addition, multiplication $\Gn\times \Gn \to \Gn$ and inverse $\Gn \to \Gn$ are morphisms of ind-varieties where for the latter one has to use the fact that $\deg \f^{-1}\leq (\deg \f)^{n-1}$. This seems to be a classical result for birational maps of $\PP^{n}$ based on B\'ezout's Theorem (see \cite[Corollary~(1.4) and Theorem~(1.5)]{BaCoWr1982The-Jacobian-conje}). It follows that for every subgroup $G \subset \Gn$ the closure $\bar G$ in $\Gn$ is also a subgroup. 

A closed subgroup $G$ contained in some $(\Gn)_{d}$ is called an {\it algebraic subgroup}. In fact, such a $G$ is an affine algebraic group which acts faithfully on $\An$, and for every affine algebraic group $H$ acting on $\An$ the image of $H$ in $\Gn$ is an algebraic subgroup.

A subset $X \subset \Gn$ is called {\it bounded constructible}, if $X$ is a constructible subset of some $(\Gn)_{d}$. 

\begin{lem}\label{alggp.lem}
Let $G\subset \Gn$ be a subgroup and let $X\subset G$ be a subset which is dense in $G$ and bounded constructible. Then $G$ is an algebraic subgroup, and $G = X\circ X$.
\end{lem}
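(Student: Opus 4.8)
The plan is to identify $G$ with the closure $\bar G = \bar X$, show that this closure is an algebraic subgroup, and then run a Chevalley-type argument proving that a dense constructible subset of an algebraic group already covers the whole group after a single multiplication.

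First I would record the structural facts. Since $X$ is bounded constructible, it is a constructible subset of some $(\Gn)_{d}$; as $(\Gn)_{d}$ is a closed affine subvariety of $\Gn$, the closure $\bar X$ (taken in $\Gn$, equivalently in $(\Gn)_{d}$) is a closed affine subvariety contained in $(\Gn)_{d}$, hence bounded. Because $X\subset G$ is dense, we have $G\subset \bar X$ and therefore $\bar X = \bar G$. By the fact recalled above that the closure of a subgroup of $\Gn$ is again a subgroup, $\bar G$ is a subgroup; being closed and contained in $(\Gn)_{d}$ it is an algebraic subgroup. Write $H:=\bar G=\bar X$. It then suffices to prove $X\circ X = H$, for granting this the chain $H = X\circ X \subset G\circ G = G \subset \bar G = H$ forces $G=H$, so $G$ is the algebraic subgroup $H$ and $G = X\circ X$, exactly as claimed.

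To prove $X\circ X = H$ the key classical input is that a constructible set which is dense in an algebraic group contains a subset that is open and dense in it. I would first note that $H$ has finitely many connected components, each open, closed and irreducible, and that density of $X$ in $H$ forces $X$ to be dense in each component $C$ (because $C$ is open). On each component the dense constructible set $X\cap C$ contains an open dense subset of $C$, and taking the union over the components yields an open subset $U\subset X$ that is dense in $H$. Now fix $g\in H$ and consider $U$ together with the translated inverse $g\circ U^{-1}:=\{g\circ u^{-1} : u\in U\}$; both are open and dense in $H$, hence dense in every component. Within each irreducible component two dense open subsets must meet, so $U\cap(g\circ U^{-1})\neq\emptyset$. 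Any point of this intersection yields a factorization $g = u_{1}\circ u_{2}$ with $u_{1},u_{2}\in U\subset X$, whence $g\in X\circ X$. Therefore $X\circ X = H$, which completes the argument.

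I expect the main obstacle to be the transition from the ind-variety $\Gn$ to a genuine affine algebraic group: one must be sure that $\bar G$ is a bona fide algebraic subgroup, which rests on boundedness of $\bar X$ inside a single $(\Gn)_{d}$ together with the closure-of-a-subgroup fact. The second delicate point is the bookkeeping over the connected components of the possibly disconnected $H$, since the ``two dense open sets intersect'' principle only applies inside an irreducible variety and must be applied component by component.
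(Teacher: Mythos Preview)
Your proposal is correct and follows essentially the same route as the paper: identify $\bar G=\bar X\subset(\Gn)_{d}$ as an algebraic subgroup, extract from the dense constructible set $X$ an open dense subset $U\subset\bar G$, and use the standard $U\circ U=\bar G$ argument to obtain $\bar G=X\circ X\subset G$. The only difference is that you spell out the component-by-component bookkeeping, which the paper simply absorbs into the assertion ``$U\circ U=\bar G$''.
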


\begin{proof}
By assumption $G \subset \bar X \subset (\Gn)_{d}$ for some $d$ and so $\bar G=\bar X$ is an algebraic subgroup. Moreover, there is a subset $U\subset X$ which is open and dense in $\bar G$. Then $U\circ U = \bar G$, and so $\bar G =  G = X\circ X$.
\end{proof}

An element $\g\in\Gn$ is called {\it locally finite\/} if it induces a locally finite automorphism of the algebra $\CC[x_{1},\ldots,x_{n}]$ of polynomial functions on $\An$. This is equivalent to the condition that the linear span of 
$\{(\g^{m})^{*}(f)\mid m\in\ZZ\}$ is finite dimensional for all $f\in\CC[x_{1},\ldots,x_{n}]$. 

More generally, an action of a group $G$ on an affine variety $X$ is called {\it locally finite\/} if the induced action on the coordinate ring $\OOO(X)$ is locally finite, i.e. for all $f\in\OOO(X)$ the linear span $\langle Gf \rangle$ is finite dimensional. It is easy to see that the image of $G$ in $\Aut(X)$  is dense in an algebraic group $\bar G$ which acts algebraically on $X$. In fact, one first chooses a finite dimensional $G$-stable subspace $W \subset \OOO(X)$ which generates $\OOO(X)$, and then defines $\bar G \subset \GL(W)$ to be the closure of the image of $G$
inside $\GL(W)$.

The next result will be used in the following section. We start again with an action of a group $G$ on an affine variety $X$ and assume that $x_{0}\in X$ is a fixed point. Then we obtain a representation $\tau\colon G \to \GL(T_{x_{0}}X)$ on the tangent space at $x_{0}$, given by $\tau(g):= d_{x_{0}}g$.

\begin{lem}\label{faithful.lem}
Let $G$ act faithfully on an irreducible affine variety $X$.  Assume that $x_{0}\in X$ is a fixed point and that  there is a $G$-stable decomposition $\mm_{x_{0}}=V \oplus \mm_{x_{0}}^{2}$. Then the tangent representation $\tau\colon G \to \GL(T_{x_{0}} X)$ is faithful.
\end{lem}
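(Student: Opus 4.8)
The plan is to show that $\ker\tau$ is trivial and then invoke the faithfulness of the $G$-action on $X$. Write $A:=\OOO(X)$; since $X$ is irreducible, $A$ is a Noetherian domain, and the fixed point $x_{0}$ yields a decomposition $A=\CC\oplus\mm_{x_{0}}$ in which $\mm_{x_{0}}$ is a $G$-stable maximal ideal. The tangent map $d_{x_{0}}g$ is dual to the action of $g^{*}$ on the cotangent space $\mm_{x_{0}}/\mm_{x_{0}}^{2}$, so $g\in\ker\tau$ precisely when $g^{*}$ induces the identity on $\mm_{x_{0}}/\mm_{x_{0}}^{2}$. Now the projection $\mm_{x_{0}}\onto\mm_{x_{0}}/\mm_{x_{0}}^{2}$ restricts, thanks to the given decomposition, to a $G$-equivariant isomorphism $V\simto\mm_{x_{0}}/\mm_{x_{0}}^{2}$. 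Hence $g\in\ker\tau$ if and only if $g^{*}$ fixes $V$ pointwise. This is the one place where the $G$-stable complement $V$ is essential: it upgrades ``acts trivially on the cotangent space'' to ``fixes an explicit subset of $A$ pointwise''.

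Next I would show that $V$ generates $A$ in the $\mm_{x_{0}}$-adic sense, namely $A=\CC[V]+\mm_{x_{0}}^{N}$ for every $N\geq1$, where $\CC[V]$ denotes the subalgebra generated by $V$. The point is that the associated graded ring $\bigoplus_{k\geq0}\mm_{x_{0}}^{k}/\mm_{x_{0}}^{k+1}$ is generated in degree one by $\mm_{x_{0}}/\mm_{x_{0}}^{2}$, whose degree-one piece is already spanned by the image of $V$. Lifting a homogeneous expression degree by degree produces, for each $a\in\mm_{x_{0}}^{k}$, an element $b\in\CC[V]\cap\mm_{x_{0}}^{k}$ with $a-b\in\mm_{x_{0}}^{k+1}$; iterating gives $\mm_{x_{0}}\subseteq\CC[V]+\mm_{x_{0}}^{N}$ for all $N$, and adjoining the constants yields the claim.

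Now take $g\in\ker\tau$. Then $g^{*}$ fixes $\CC[V]$ pointwise and preserves each $\mm_{x_{0}}^{N}$ (since $x_{0}$ is fixed). For $a\in A$ write $a=b+m$ with $b\in\CC[V]$ and $m\in\mm_{x_{0}}^{N}$; then $g^{*}(a)-a=g^{*}(m)-m\in\mm_{x_{0}}^{N}$. As $N$ is arbitrary, $g^{*}(a)-a\in\bigcap_{N\geq1}\mm_{x_{0}}^{N}$. By the Krull intersection theorem, applied to the Noetherian domain $A$ and the proper ideal $\mm_{x_{0}}$, this intersection is zero. Hence $g^{*}=\id_{A}$, so $g$ acts trivially on $X$, and faithfulness forces $g=\id$. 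Thus $\ker\tau$ is trivial and $\tau$ is faithful.

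I expect the main obstacle to be the passage from ``$g^{*}$ and $\id$ agree to infinite order at $x_{0}$'' to genuine equality, that is, controlling $\bigcap_{N}\mm_{x_{0}}^{N}$. This is exactly where irreducibility enters, making $A$ an integral domain and allowing the Krull intersection theorem to annihilate the intersection; without this hypothesis the argument could break down. The graded-generation step is routine once the $G$-stable complement $V$ is available.
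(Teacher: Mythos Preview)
Your proof is correct and follows essentially the same line as the paper's: show that an element of $\ker\tau$ fixes $V$ pointwise, deduce that it acts trivially on $\OOO(X)/\mm_{x_{0}}^{k}$ for every $k$, and conclude via $\bigcap_{k}\mm_{x_{0}}^{k}=\{0\}$. You spell out in more detail what the paper compresses into two sentences---the associated-graded generation step and the explicit appeal to Krull's intersection theorem---but the strategy is identical.
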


\begin{proof} 
Let $g\in \ker \tau$. Then $g$ acts trivially on $V$, hence on all powers $V^{j}$ of $V$. This implies that the action of $g$ on $\OOO(X)/\mm_{x_{0}}^{k}$ is trivial for all $k\geq 1$. Since 
$\bigcap_{k}\mm_{x_{0}}^{k}= \{0\}$ the claim follows.
\end{proof}
We remark that a $G$-stable decomposition $\mm_{x_{0}}=V \oplus \mm_{x_{0}}^{2}$ like in the lemma above always exists if $G$ is a reductive algebraic group.

\par\medskip
\subsection{Tori and centralizers}
For the convenience of the reader we recall two important results about fixed point sets of group actions which we will need below. A complex variety $X$ is called {\it $\ZZ/p\ZZ$-acyclic} if  $H_j(X, \ZZ/p\ZZ)= 0$
for $j > 0$ and $H_0(X, \ZZ/p\ZZ)=\ZZ/p\ZZ$. The first result goes back to \name{P.~A. Smith} \cite{Sm1934A-theorem-on-fixed}.

\begin{prop}[Corollary to Theorem 7.5 in \cite{Se2009How-to-use-finite-}] \label{Serre.prop}
Let $G$ be a finite $p$-group and let $X$ be an affine $G$-variety.
If $X$ is $\ZZ/p\ZZ$-acyclic, then so is $X^G$. 
\end{prop}

The second result is due to \name{Fogarty} and describes the {\it tangent cone} $C(X^G, x)$
of the fixed point set $X^G$.

\begin{prop}[Theorem 5.2 in \cite{Fo1973Fixed-point-scheme}]\label{Fogarty.prop}
Let $G$ be a reductive group. If $X$ is an affine $G$-variety, then for each point $x \in X$ we have
$C(X^G, x) =C(X, x)^G$.
\end{prop}

Define $\mu_{k}:=\{\g\in D_{n}\mid \g^{k}=\id\}$. We have 
$\mu_{k}\simeq (\ZZ/k)^{n}$, and $\mu_{\infty}:=\bigcup_{k}\mu_{k}\subset D_{n}$ 
is the subgroup of elements of finite order where $\mu_{\infty}\simeq (\QQ/\ZZ)^{n}$.
The next lemma about the centralizer of $\mu_{k}$ is easy.

\begin{lem}\label{lem1}
For every $k>1$ we have $\C_{\Gn}(\mu_{k}) = \C_{\GL_{n}}(\mu_{k}) = D_{n}$.
\end{lem}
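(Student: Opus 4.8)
The plan is to show $\C_{\Gn}(\mu_k) = D_n$ for $k > 1$ by the chain of inclusions
$$
D_n \subseteq \C_{\Gn}(\mu_k) \subseteq \C_{\GL_n}(\mu_k) = D_n,
$$
so that all three groups coincide. The first inclusion is immediate since $D_n$ is abelian and $\mu_k \subset D_n$. The last equality, $\C_{\GL_n}(\mu_k) = D_n$, is a standard linear-algebra computation: an element of $D_n$ of exact order $k$ on each coordinate (for instance $(\zeta x_1, \ldots, \zeta x_n)$ where $\zeta$ is a primitive $k$-th root of unity, or better, a diagonal automorphism whose entries are distinct $k$-th roots of unity) has distinct eigenvalues, and a matrix in $\GL_n$ commuting with it must preserve its eigenspaces, hence be diagonal. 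So the real content is the middle inclusion $\C_{\Gn}(\mu_k) \subseteq \GL_n$.

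So first I would reduce to proving that any $\g = (g_1, \ldots, g_n) \in \Gn$ commuting with all of $\mu_k$ is in fact linear, i.e.\ lies in $\GL_n$. The idea is to exploit commutation with the diagonal scalings to constrain the monomials appearing in each $g_i$. Concretely, for a diagonal element $\t = (t_1 x_1, \ldots, t_n x_n) \in \mu_k$, the condition $\g \circ \t = \t \circ \g$ translates into the equations
$$
g_i(t_1 x_1, \ldots, t_n x_n) = t_i\, g_i(x_1, \ldots, x_n) \quad \text{for } i = 1, \ldots, n.
$$
Writing $g_i = \sum_\alpha c_\alpha x^\alpha$, the monomial $x^\alpha = x_1^{\alpha_1}\cdots x_n^{\alpha_n}$ transforms under $\t$ by the character $t^\alpha := t_1^{\alpha_1}\cdots t_n^{\alpha_n}$. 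The commutation relation forces, for every $\t \in \mu_k$ and every $\alpha$ with $c_\alpha \neq 0$, the identity $t^\alpha = t_i$. Since $\t$ ranges over all of $\mu_k$, whose characters on monomials separate exponent vectors modulo $k$, this says $\alpha \equiv e_i \pmod{k}$ componentwise, where $e_i$ is the $i$-th standard basis vector.

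The key step is then to upgrade this congruence $\alpha \equiv e_i \pmod k$ to the genuine equality $\alpha = e_i$, which is exactly the statement that $g_i$ is linear (a scalar multiple of $x_i$). The a priori possibility is a monomial like $x_i^{k+1}$ or $x_1^k x_i$, which also satisfies the congruence. To rule these out I would invoke that $\g$ is an \emph{automorphism}: its Jacobian determinant is a nonzero constant, and more directly, one can use that the inverse $\g^{-1}$ is also polynomial and likewise commutes with $\mu_k$, so it too must have each component built from monomials $\equiv e_i \pmod k$. I expect this invertibility argument to be the main obstacle, since the congruence alone permits higher-degree monomials; the cleanest route is probably to observe that the lowest-degree (linear) part of $\g$ must already be invertible —- because the linear terms of $\g$ and $\g^{-1}$ are inverse matrices —- and that the surviving monomial set $\{\alpha : \alpha \equiv e_i \pmod k,\ c_\alpha \neq 0\}$ in a degree-preserving sense cannot contribute anything beyond the linear term without violating the fact that $\g \circ \g^{-1} = \id$. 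Once $\g$ is forced to be linear we conclude $\g \in \C_{\GL_n}(\mu_k) = D_n$, closing the chain of inclusions.
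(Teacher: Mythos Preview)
The paper states this lemma as ``easy'' and gives no proof, so there is nothing to compare against; I assess your argument on its own.

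Your monomial computation is correct: commutation with every $\bd\in\mu_k$ forces each nonzero monomial $x^\alpha$ in $g_i$ to satisfy $\alpha\equiv e_i\pmod k$. It is worth rewriting this conclusion as
\[
g_i \;=\; x_i\, P_i(x_1^{\,k},\ldots,x_n^{\,k})
\]
for some polynomial $P_i$. In particular the linear part of $\g$ is already diagonal, so the detour through $\C_{\GL_n}(\mu_k)$ is unnecessary. (As an aside, your suggested argument for $\C_{\GL_n}(\mu_k)=D_n$ via a single element with distinct eigenvalues does not work when $k<n$, since there are not enough $k$-th roots of unity; one must genuinely use the whole group $\mu_k$.)

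The real gap is in your final step, which you yourself flag as ``the main obstacle'' and then treat only heuristically (``in a degree-preserving sense cannot contribute anything beyond the linear term''). That is not yet an argument. In fact the step is a one-liner once you use the factorisation above. Since $\g^{-1}$ also commutes with $\mu_k$, we likewise have $(\g^{-1})_i = x_i\, Q_i(x_1^{\,k},\ldots,x_n^{\,k})$. Writing $y_j=x_j^{\,k}$, the $i$-th component of $\g^{-1}\circ\g$ equals
\[
g_i\cdot Q_i(g_1^{\,k},\ldots,g_n^{\,k})
\;=\; x_i\, P_i(y)\cdot Q_i\!\bigl(y_1 P_1(y)^k,\ldots,y_n P_n(y)^k\bigr),
\]
and setting this equal to $x_i$ shows that $P_i(y)$ is a unit in $\CC[y_1,\ldots,y_n]$, hence a nonzero constant. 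Thus $g_i=c_i x_i$ and $\g\in D_n$, closing the chain directly.
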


The following result is crucial for the proof of the main theorem.

\begin{prop}\label{crucial.prop}
Let $\mu\subset \Gn$ be a finite subgroup isomorphic to $\mu_{2}$. Then the centralizer $\C_{\Gn}(\mu)$ is a 
diagonalizable algebraic subgroup of $\Gn$, i.e. isomorphic to a closed subgroup of a torus. Moreover $\dim \C_{\Gn}(\mu) \leq n$.
\end{prop}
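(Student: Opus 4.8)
The plan is to produce a fixed point for $\mu$, identify the tangent action there with the standard sign group, and then show that the differential at that point embeds $\C_{\Gn}(\mu)$ as a closed subgroup of the maximal torus $D_{n}$. Since $\mu\cong\mu_{2}\cong(\ZZ/2)^{n}$ is a finite $2$-group acting by homeomorphisms on the contractible space $\An=\CC^{n}$, Smith theory (localization in $\mu$-equivariant $\FF_{2}$-cohomology) shows that $\An^{\mu}$ is nonempty and connected; for an arbitrary algebraically closed field of characteristic zero one replaces this by an algebraic fixed-point argument. Pick $x_{0}\in\An^{\mu}$. As $\mu$ is finite, hence reductive, there is a $\mu$-stable decomposition $\mm_{x_{0}}=V\oplus\mm_{x_{0}}^{2}$, so Lemma~\ref{faithful.lem} makes the tangent representation $\tau\colon\mu\to\GL(T_{x_{0}}\An)$ faithful. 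A faithful $n$-dimensional representation of $(\ZZ/2)^{n}$ is, in a suitable basis, the full group of sign changes, so $(T_{x_{0}}\An)^{\mu}=0$; since the fixed locus of a finite group on a smooth variety is smooth with tangent space $(T_{x_{0}}\An)^{\mu}$, the point $x_{0}$ is isolated, and connectedness forces $\An^{\mu}=\{x_{0}\}$. Conjugating by a translation and an element of $\GL_{n}$ (automorphisms of $\Gn$ preserving every property in the statement), I may assume $x_{0}=0$ and $\tau(\mu)=\mu_{2}\subset D_{n}$.

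Put $C:=\C_{\Gn}(\mu)$. Every $\g\in C$ commutes with $\mu$, hence maps $\An^{\mu}=\{0\}$ to itself and fixes $0$; differentiating gives a homomorphism $\rho\colon C\to\GL(T_{0}\An)=\GL_{n}$. For $\g\in C$ and $m\in\mu$ the chain rule gives $\rho(\g)\,\tau(m)=\tau(m)\,\rho(\g)$, so $\rho(C)\subseteq\C_{\GL_{n}}(\mu_{2})=D_{n}$ by Lemma~\ref{lem1}. Thus $\rho$ maps $C$ into the torus $D_{n}$.

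The heart of the proof, and the step I expect to be the main obstacle, is that $\rho$ is faithful. Global polynomiality is essential here: the analogous statement fails for formal or analytic automorphisms, since the formal centralizer of $\mu_{2}$ contains elements tangent to the identity such as $(x_{1}(1+x_{2}^{2}),x_{2},\dots,x_{n})$, which are not polynomial automorphisms. To use this, let $s_{1},\dots,s_{n}\in\mu$ be the involutions with $\tau(s_{i})$ the $i$-th coordinate reflection. Each $F_{i}:=\An^{s_{i}}$ is a smooth, connected (Smith theory again), hence irreducible hypersurface; as $\An$ is factorial with trivial Picard group, its ideal is $(y_{i})$ for an irreducible $y_{i}$. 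Each $\g\in C$ commutes with $s_{i}$ and so preserves $F_{i}$, whence $\g^{*}y_{i}=c_{i}(\g)\,y_{i}$ with $c_{i}(\g)\in\CC^{*}$; moreover $d_{0}y_{i}$ spans the $s_{i}$-anti-invariant line, so $Y:=(y_{1},\dots,y_{n})$ is \'etale at $0$, hence dominant and generically finite. In the basis $\{d_{0}y_{i}\}$ one has $\rho(\g)=\operatorname{diag}(c_{i}(\g))$, so $\ker\rho=\{\g\in C\mid \g^{*}y_{i}=y_{i}\text{ for all }i\}$ consists of automorphisms fixing the finite-index subfield $\CC(y_{1},\dots,y_{n})\subset\CC(\An)$; hence $\ker\rho$ is finite. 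But a finite-order element with trivial differential generates a finite, hence reductive, group to which Lemma~\ref{faithful.lem} applies, forcing it to be trivial. Therefore $\ker\rho=\{\id\}$, $\rho$ is injective, and in particular $C$ is abelian.

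Finally I would upgrade this to the stated conclusion. One checks that the $C$-action on $\An$ is locally finite: on the associated graded $\operatorname{gr}_{\mm_{0}}\OOO(\An)$ each $\g\in C$ acts by the diagonal $\operatorname{Sym}(\rho(\g))$, and since $\OOO(\An)$ is finite over the invariants $\OOO(\An)^{\mu}$ this bounds the span of each $C$-orbit in $\OOO(\An)$. As $C$ is closed in $\Gn$ (being a centralizer) and locally finite, it equals its algebraic hull and is an algebraic subgroup. Then $\rho\colon C\to D_{n}$ is an injective homomorphism of algebraic groups, hence in characteristic zero a closed immersion onto a closed subgroup of $D_{n}$; consequently $C$ is diagonalizable with $\dim C\le\dim D_{n}=n$.
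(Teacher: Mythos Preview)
Your overall strategy matches the paper's: locate the unique $\mu$-fixed point via Smith theory, pass to the tangent representation, introduce the defining equations $y_i$ of the reflection fixed loci, and show that the centralizer acts by scalars on each $y_i$. Two comments.

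First, your faithfulness argument is correct but more elaborate than needed. You have already established $\g^*y_i=c_i(\g)y_i$, so $V:=\CC y_1\oplus\cdots\oplus\CC y_n$ is a $C$-stable subspace with $\mm_0=V\oplus\mm_0^2$ (the differentials $d_0y_i$ are a basis of the cotangent space). Lemma~\ref{faithful.lem} therefore applies directly to $C$ itself, not just to a finite subgroup, and yields injectivity of $\rho$ in one line; the detour through the finite extension $\CC(y_1,\dots,y_n)\subset\CC(\An)$ is unnecessary. This direct application of Lemma~\ref{faithful.lem} is exactly what the paper does.

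Second, and this is a genuine gap, your argument for local finiteness does not work as written. Knowing that $C$ acts diagonally on $\operatorname{gr}_{\mm_0}\OOO(\An)$ says nothing about the growth of $\deg(\g^*f)$: the $\mm_0$-adic filtration controls vanishing order at $0$, not polynomial degree. And finiteness of $\OOO(\An)$ over $\OOO(\An)^{\mu}$ is irrelevant unless you already know $C$ acts locally finitely on $\OOO(\An)^{\mu}$, which is not established. What you \emph{do} have is that $C$ acts locally finitely (in fact diagonally) on the subalgebra $\CC[y_1,\dots,y_n]$, and that $Y=(y_1,\dots,y_n)\colon\An\to\An$ is a $C$-equivariant dominant morphism. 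Passing from local finiteness on the target of such a map to local finiteness on the source is precisely the content of the paper's Lemma~\ref{missing.lem}, whose proof is nontrivial: one first handles the integral closure of $\CC[y_1,\dots,y_n]$ by bounding degrees through integral equations, and then uses a birational argument to transport the algebraic $\bar C$-action back along $Y$. Without this lemma (or an equivalent device) you cannot conclude that $C$ lies in some $(\Gn)_d$, hence not that $C$ is an algebraic subgroup, and your final step (``$\rho$ is an injective homomorphism of algebraic groups, hence a closed immersion'') is not justified.
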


\begin{proof}
We first remark that $\C_{\Gn}(\mu)$ is a closed subgroup of $\Gn$.
By Proposition~\ref{Serre.prop} the fixed point set $F:=(\An)^{\mu'}$ of every subgroup $\mu' \subset \mu$ is $\ZZ/2$-acyclic, in particular non-empty and  connected. We also know that $F$ is smooth and that $T_{a}F = (T_{a}\An)^{\mu'}$ since $\mu'$ is linearly reductive (see Proposition~\ref{Fogarty.prop}). If $a\in(\An)^{\mu}$, then the tangent representation of $\mu$ on $T_{a}\An$ is faithful, by Lemma~\ref{faithful.lem} above, and so $a$ is an isolated fixed point. Hence, $(\An)^{\mu}=\{a\}$. 

Choose generators $\sigma_{1},\ldots,\sigma_{n}$ of $\mu$ such that the images in $\GL(T_{a}\An)$ are reflections, i.e. have a single eigenvalue $-1$, and set $H_{i}:=(\An)^{\sigma_{i}}$. The tangent representation shows that $H_{i}$ is a hypersurface, hence defined by an irreducible polynomial $f_{i}\in\CC[x_{1},\ldots,x_{n}]$. Moreover, $\sigma_{i}^{*}(f_{i}) = -f_{i}$ and $\sigma_{i}^{*}(f_{j})=f_{j}$ for $j\neq i$. It follows that the linear subspace $V:=\CC f_{1}\oplus\cdots\oplus \CC f_{n}\subset \CC[x_{1},\ldots,x_{n}]$ is $\mu$-stable. In addition, any $\g\in G:=\C_{\Gn}(\mu)$ fixes $a$ and stabilizes all $\CC f_{i}$ and so, by the following Lemma~\ref{missing.lem} applied to the morphism $\phi:=(f_{1},\ldots,f_{n})\colon \An \to \An$, the action of $G$ on $\An$ is locally finite. Since $G$ is a closed subgroup of $\Gn$, it follows that it is an algebraic subgroup of $\Gn$, and its image in $\GL(V)$ is a closed subgroup contained in a maximal torus, hence a diagonalizable group.

Finally,  $\mm_{a} = V \oplus\mm_{a}^{2}$, and thus  the homomorphism $G \to \GL(T_a \AA^n)$ is injective, by Lemma~\ref{faithful.lem}. Hence the claim.
\end{proof}

\begin{rem} 
It is not difficult to show that the proposition holds for every finite commutative subgroup $\mu$ of rank $n$. In fact, the proof carries over to subgroups isomorphic to $\mu_{p}$ where $p$ is a prime, and every finite commutative subgroup $\mu$ of rank $n$ contains such a group.
\end{rem}

\begin{lem}\label{missing.lem}
Let $G\subset \Aut(\An)$ be a subgroup and let $\phi\colon \An \to X$ be a dominant morphism such that $\dim X = n$. Assume that $\phi^{*}(\OOO(X))$ is a $G$-stable subalgebra and that the induced action of $G$ on $X$ is locally finite. Then the same 
holds for the action of $G$ on $\An$.
\end{lem}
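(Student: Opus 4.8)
The plan is to translate local finiteness of the $G$-action on $A:=\phi^{*}(\OOO(X))$ --- which amounts to bounded-degree orbits for finitely many coefficients upstairs --- into a uniform degree bound for orbits downstairs in $B:=\OOO(\An)=\CC[x_{1},\ldots,x_{n}]$. Both $A\subseteq B$ are $G$-stable, and since $\phi$ is dominant the inclusion $\Quot(A)\hookrightarrow\Quot(B)$ is an algebraic field extension (the two varieties have the same transcendence degree $n$); hence every $h\in B$ is algebraic over $\Quot(A)$. Clearing denominators, I obtain a relation $\sum_{j=0}^{d}c_{j}h^{j}=0$ with $c_{j}\in A$, $c_{d}\neq 0$ and $d\geq 1$.

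Next I would bound $\deg g(h)$ uniformly in $g\in G$. Because the action of $G$ on $A$ is locally finite, the subspace $U:=\sum_{j}\langle G c_{j}\rangle\subseteq A$ is finite dimensional and $G$-stable, so $e:=\max\{\deg u\mid u\in U\setminus\{0\}\}<\infty$ and $\deg g(c_{j})\leq e$ for all $g$ and $j$. Applying $g$ to the relation gives $\sum_{j} g(c_{j})\,g(h)^{j}=0$ in the integral domain $B$. Writing $t:=\deg g(h)$ and isolating the top term, $g(c_{d})\,g(h)^{d}=-\sum_{j<d}g(c_{j})\,g(h)^{j}$, the left-hand side has degree exactly $\deg g(c_{d})+dt$ while the right-hand side has degree at most $e+(d-1)t$. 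Comparing these forces $t+\deg g(c_{d})\leq e$, hence $\deg g(h)\leq e$. Thus the whole orbit $G h$ lies in the finite-dimensional space of polynomials of degree $\leq e$, so $\langle G h\rangle$ is finite dimensional; as $h$ was arbitrary, the $G$-action on $\An$ is locally finite.

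The only real work is the degree comparison in the second step, and its engine is the interplay of three facts: $B$ is an integral domain (so degrees of products add and no cancellation spoils the top term), $h$ is algebraic over $\Quot(A)$ (so a relation of positive degree $d$ in $h$ exists), and the coefficients $c_{j}$ have orbits of bounded degree (local finiteness on $A$). The algebraicity is exactly where dominance --- more precisely, that $\phi$ is dominant between varieties of equal dimension, as in the application $\phi=(f_{1},\ldots,f_{n})\colon\An\to\An$ --- enters, and it is the one indispensable structural input, since for $\dim X<n$ a coefficient relation of positive degree in $h$ need not exist. If one prefers to invoke algebraicity as sparingly as possible, I would instead establish the bound only for the generators $h=x_{1},\ldots,x_{n}$ and then deduce local finiteness on all of $B$ from the resulting finite-dimensional, $G$-stable, algebra-generating subspace $V:=\sum_{i}\langle G x_{i}\rangle$, using that each $G$-stable subspace $\sum_{k\leq m}V^{k}$ is finite dimensional.
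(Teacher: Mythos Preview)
Your proof is correct, and it takes a different, more direct route than the paper's. Both arguments need $\Quot(A)\subseteq\Quot(B)$ to be algebraic, i.e.\ $\dim X=n$; you flag this explicitly, while the paper uses it implicitly at the line ``we can assume \ldots\ that $\phi$ is birational'' (and indeed the lemma is false without it: take $X$ a point and $G$ generated by a H\'enon map). The paper first runs the degree estimate only for elements \emph{integral} over $A$, using a monic equation, which yields local finiteness on the integral closure $R\subseteq B$; it then replaces $X$ by $\mathrm{Spec}\,R$ (now normal, with $\phi$ birational) and finishes with a geometric argument---Zariski's Main Theorem plus passage to the algebraic hull $\bar G$ acting on a $G$-saturated open set---to transfer local finiteness from $\OOO(X)$ to $\OOO(Y)\supseteq B$. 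Your key observation is that the same degree comparison works for a \emph{non-monic} relation $\sum_{j}c_{j}h^{j}=0$: since $B$ is a domain and $g(c_{d})\neq 0$, the term $g(c_{d})\,g(h)^{d}$ has degree exactly $\deg g(c_{d})+d\deg g(h)$, and the bound $\deg g(h)\leq e$ follows at once. This handles every $h\in B$ in one stroke and eliminates the paper's entire second half; the paper's detour would only pay off in settings where the degree function on $B$ is less well behaved.
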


\begin{proof}
Put $A:=\phi^{*}(\OOO(X)) \subset  \CC[x_{1},\ldots,x_{n}]$ and denote 
by $R\subset  \CC[x_{1},\ldots,x_{n}]$ the integral closure of $A$. We first claim that 
the action of $G$ on $R$ is locally finite. In fact, let $f\in R$  and let $f^{m}+ a_{1}f^{m-1}+\cdots+a_{m}=0$ 
be an integral equation of $f$ over $A$. By assumption, the spaces $\langle G a_{i}\rangle$ are all finite 
dimensional, and so there is a $d\in\NN$ such that $\deg g a_{i} < d$ for all $g\in G$ and all $a_{i}$. 
Since $gf$ satisfies the equation $(gf)^{m}+ (ga_{1})(gf)^{m-1}+\cdots+(ga_{m})=0$ we 
get $\deg (gf) < d$ for all $g\in G$, hence the claim.

Therefore, we can assume that $X$ is normal and that $\phi\colon \An \to X$ is birational. Choose an open set $U\subset \An$ such that $\phi(U)\subset X$ is open and $\phi$ induces an isomorphism $U \simto \phi(U)$. Define $Y:=\bigcup_{g\in G} gU\subset \An$. Then the induced morphism  $\psi:=\phi|_{Y}\colon Y \to \phi(Y)$ is $G$-equivariant and a local isomorphism. This implies that  $\psi$ is a $G$-equivariant isomorphism. 

By assumption, the action of $G$ on $X$ is locally finite, and so $G$ is dense in an algebraic group $\bar G$ which acts regularly on $X$. Clearly, the open set $\phi(Y)$ is $\bar G$-stable and  thus the action of $\bar G$ on $\OOO(\phi(Y))$ is locally finite. Now the claim follows, because $\CC[x_{1},\ldots,x_{n}] \subset \OOO(Y)$ is a $G$-stable subalgebra.
\end{proof}
The proposition above has an interesting consequence for the linearization problem for finite group actions on affine 3-space $\AA^{3}$. In this case it is known that every faithful action of a non-finite reductive group on $\AA^{3}$ is linearizable (\name{Kraft-Russell}, see \cite{KrRu2012Families-of-Group-}).

\begin{cor} 
Let $\mu\subset \Gthree$ be a commutative subgroup of rank three. If the centralizer of  $\mu$ is not finite, then $\mu$ is conjugate to a subgroup of $D_3$.
\end{cor}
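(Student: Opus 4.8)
The plan is to identify the centralizer $\C_{\Gthree}(\mu)$ as a positive-dimensional diagonalizable algebraic group and then linearize its action on $\AA^3$.

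First I would apply the Remark following Proposition~\ref{crucial.prop}, which extends the proposition to every finite commutative subgroup of rank $n$. Since $\mu\subset\Gthree$ is such a subgroup for $n=3$, it contains a subgroup isomorphic to $\mu_{p}$ for a suitable prime $p$, and the proof of the proposition applies to $\C_{\Gthree}(\mu_{p})$; as $\mu_{p}\subseteq\mu$ we have $\C_{\Gthree}(\mu)\subseteq\C_{\Gthree}(\mu_{p})$. Hence $G:=\C_{\Gthree}(\mu)$ is a diagonalizable algebraic subgroup of $\Gthree$ with $\dim G\leq 3$. Being a closed subgroup of $\Gthree=\Aut(\AA^{3})$, it acts faithfully and regularly on $\AA^{3}$, and being diagonalizable it is reductive.

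Next I would feed in the hypothesis. Since $G=\C_{\Gthree}(\mu)$ is not finite, we have $\dim G\geq 1$, so $G$ is a non-finite reductive group acting faithfully on $\AA^{3}$. By the linearization theorem of \name{Kraft--Russell} \cite{KrRu2011Families-of-Group-} quoted above, such an action is linearizable: there is an automorphism $\g\in\Gthree$ with $\g\circ G\circ\g^{-1}\subseteq\GL_{3}$. Now $\g G\g^{-1}$ is a commutative subgroup of $\GL_{3}$ all of whose elements are semisimple, since $G$, being diagonalizable as an abstract group, consists of semisimple elements. Over the algebraically closed field $\CC$ a commuting family of semisimple matrices is simultaneously diagonalizable, so there is $h\in\GL_{3}$ with $h(\g G\g^{-1})h^{-1}\subseteq D_{3}$. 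Because $\mu\subseteq G$, the element $h\circ\g\in\Gthree$ conjugates $\mu$ into $D_{3}$, which is the assertion.

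I expect the real content to lie entirely in the correct bookkeeping of the two cited inputs, namely the rank-$n$ version of Proposition~\ref{crucial.prop} (giving diagonalizability and the dimension bound) and the linearization of the $G$-action. The latter is the genuine obstacle: a positive-dimensional torus action on $\AA^{3}$ is not linearizable by any elementary argument, and here I rely essentially on the \name{Kraft--Russell} theorem; once linearity is in place, the passage from $\GL_{3}$ to $D_{3}$ is the standard simultaneous diagonalization of commuting semisimple elements.
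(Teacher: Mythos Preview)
Your argument is correct and matches the paper's own (implicit) proof: the paper does not spell out a proof of this corollary, but the sentence preceding it makes clear that it is meant to follow from Proposition~\ref{crucial.prop} (via the Remark extending it to finite commutative subgroups of rank $n$) together with the \name{Kraft--Russell} linearization theorem for non-finite reductive group actions on $\AA^{3}$, which is exactly what you do. The final passage from $\GL_{3}$ to $D_{3}$ by simultaneous diagonalization and the observation that $\mu\subseteq\C_{\Gthree}(\mu)$ are the only details you add, and both are routine.
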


\par\medskip
\subsection{\texorpdfstring{$D_{n}$}{Dn}-stable unipotent subgroups}
Recall that every commutative unipotent group $U$ has a natural structure of a 
$\CC$-vector space, given by the exponential map $\exp\colon T_{e}U \simto U$. 
Thus $\Aut(U) = \GL(U)$ and  every action of an algebraic group on $U$ by group automorphisms 
is given by a linear representation. 

A (non-zero) locally nilpotent vector field $\delta=\sum_{i=1}^{n}h_{i}\frac{\partial}{\partial x_{i}}$
defines a (non-trivial) $\Cplus$-action on $\An$, hence a one-dimensional unipotent subgroup 
\[
U_{\delta} = \{ \Exp(t \delta) :=
(\exp( t \delta )(x_1), \ldots, \exp( t \delta )(x_n)) \mid t \in \Cplus \} \subseteq \Gn,
\]
and $U_{\delta}=U_{\delta'}$ if and only if $\delta'$ is a scalar multiple of $\delta$.
In the following we denote  by $e_1, \ldots, e_n$ the standard basis of $\ZZ^n$, and
 by  $\eps_1, \ldots, \eps_n$ the standard basis of the character group of $D_{n}$.

\begin{lem}\label{stableunipot.lem}
Let $U=U_{\delta} \subset \Gn$ be a one-dimensional unipotent subgroup. 
Then $U_{\delta}$ is normalized by $D_{n}$ if and only if  
$\delta$ is of the form $c x^{\gamma}\frac{\partial}{\partial x_{i}}$, 
where 
\[
x^{\gamma} = 
x_{1}^{\gamma_{1}}\cdots x_{i-1}^{\gamma_{i-1}}x_{i+1}^{\gamma_{i+1}}
\cdots x_{n}^{\gamma_{n}}
\]
and $c \in \Cst$. In particular, $U_{\delta}=\{(x_{1},\ldots,x_{i}+s(cx^{\gamma}),\ldots,x_{n})\mid s\in\CC\}$, and 
$\bd \circ \Exp(s \delta) \circ \bd^{-1}= \Exp(t^{e_i-\gamma}s \delta)$ for $\bd=(t_{1}x_{1},\ldots,t_{n}x_{n})\in D_{n}$.
\end{lem}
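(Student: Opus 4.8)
The plan is to translate the normalizing condition into a statement about the conjugation action of $D_{n}$ on locally nilpotent vector fields, and to observe that this action is diagonalized by the monomial fields. Writing $\bd=(t_{1}x_{1},\ldots,t_{n}x_{n})$, a direct computation of the push-forward gives, for a monomial field,
\[
\bd_{*}\Bigl(x^{\alpha}\frac{\partial}{\partial x_{i}}\Bigr)=t^{e_{i}-\alpha}\,x^{\alpha}\frac{\partial}{\partial x_{i}},
\]
so that the fields $x^{\alpha}\frac{\partial}{\partial x_{i}}$ form an eigenbasis of the $D_{n}$-action with characters $\bd\mapsto t^{e_{i}-\alpha}$. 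Since $U_{\delta}=U_{\delta'}$ precisely when $\delta'\in\Cst\delta$, the group $D_{n}$ normalizes $U_{\delta}$ if and only if $\bd_{*}\delta\in\Cst\delta$ for all $\bd$, i.e. if and only if $\delta$ is a common eigenvector of all $\bd\in D_{n}$ — a \emph{weight vector} of some weight $w\in\ZZ^{n}$.

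For the ``if'' direction I would simply note that $\delta=cx^{\gamma}\frac{\partial}{\partial x_{i}}$ with $\gamma_{i}=0$ is a weight vector of weight $e_{i}-\gamma$, so $\bd\,U_{\delta}\,\bd^{-1}=U_{\bd_{*}\delta}=U_{\delta}$. The explicit descriptions then fall out: from $\delta(x_{i})=cx^{\gamma}$ and $\delta(x_{j})=0$ for $j\neq i$ one gets $\delta^{2}(x_{k})=0$ for all $k$, hence $\exp(s\delta)=\id+s\delta$, giving the stated form of $\delta(s)$; and $\bd_{*}\delta=t^{e_{i}-\gamma}\delta$ yields $\bd\circ\delta(s)\circ\bd^{-1}=\exp(s\,\bd_{*}\delta)=\delta(t^{e_{i}-\gamma}s)$.

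The substance is the ``only if'' direction, and here local nilpotency must be fed into the weight decomposition. Given a nonzero weight vector $\delta$ of weight $w$, every monomial $x^{\alpha}\frac{\partial}{\partial x_{i}}$ occurring in it satisfies $e_{i}-\alpha=w$, so $\alpha=e_{i}-w$ is forced by $i$, and $\delta=\sum_{i\in S}c_{i}\,x^{e_{i}-w}\frac{\partial}{\partial x_{i}}$ with $S:=\{i:e_{i}-w\in\ZZ_{\ge 0}^{n},\ c_{i}\neq 0\}$. The key computation is that weight-homogeneity makes $\delta$ shift exponents uniformly:
\[
\delta(x^{\beta})=\Bigl(\sum_{k\in S}c_{k}\beta_{k}\Bigr)x^{\beta-w},
\]
where the monomial on the right is legal whenever its coefficient is nonzero. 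Iterating with $\beta=e_{i}$ yields $\delta^{m}(x_{i})=P^{(i)}_{m}\,x^{e_{i}-mw}$ with $P^{(i)}_{m}=\prod_{\ell=0}^{m-1}(c_{i}-\ell A)$ and $A:=\sum_{k\in S}c_{k}w_{k}$. Thus local nilpotency is equivalent to the vanishing of some factor, i.e. to $c_{i}=\ell_{i}A$ for some $\ell_{i}\in\NN_{\ge 1}$ and every $i\in S$; in particular $A\neq0$ and $\sum_{k\in S}\ell_{k}w_{k}=1$.

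I expect this extraction of the recursion and of the constraint $\sum_{k\in S}\ell_{k}w_{k}=1$ to be the main obstacle; the remaining bookkeeping is short. Membership $k\in S$ forces $e_{k}-w\ge0$ (componentwise), hence $w_{j}\le0$ for all $j\neq k$. If $|S|\ge2$ this makes $w_{j}\le0$ for every $j$, contradicting $\sum_{k\in S}\ell_{k}w_{k}=1$ with all $\ell_{k}\ge1$; so $S=\{i\}$ is a single index and $\delta=c_{i}x^{e_{i}-w}\frac{\partial}{\partial x_{i}}$. Then $A=c_{i}w_{i}$ and $c_{i}=\ell_{i}c_{i}w_{i}$ give $\ell_{i}w_{i}=1$ with $w_{i}\in\ZZ$, forcing $w_{i}=1$. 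Setting $\gamma:=e_{i}-w$ we get $\gamma_{i}=1-w_{i}=0$ and $\delta=c_{i}x^{\gamma}\frac{\partial}{\partial x_{i}}$, as claimed.
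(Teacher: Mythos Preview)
Your argument is correct and follows essentially the same route as the paper. Both proofs reduce the normalizing condition to $\delta$ being a $D_n$-weight vector, write $\delta$ as a sum of monomial fields $c_i x^{\alpha^{(i)}}\partial/\partial x_i$ with $\alpha^{(i)}-e_i$ independent of $i$, compute $\delta^{m}(x_i)$ explicitly as a monomial times a product of linear factors in the $c_k$, and use local nilpotency to force a numerical identity that rules out more than one term. Your variable $w=e_i-\alpha$ is the paper's $-\beta$, your $A$ is their $-b$, and your $\ell_i$ their $m_i$; your contradiction $\sum_{k\in S}\ell_k w_k=1$ with all $w_k\le 0$ is exactly their $\sum m_i\beta_i=-1$ with all $\beta_i\ge 0$. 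The only organizational difference is that the paper first assumes all $\beta_i\ge0$ and derives the contradiction, whereas you observe directly that $|S|\ge2$ already forces all $w_j\le0$; this is the same dichotomy read in the other direction.
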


\begin{proof}
If $U_{\delta}$ is normalized by $D_{n}$, then $\bd^\ast \circ \delta \circ (\bd^{\ast})^{-1} \in \Cst \delta$ for all	
$\bd  \in D_n$. Writing 
$\delta = \sum_{i} h_i \frac{\partial}{\partial x_i}$ it follows that each $h_i$ is a monomial of the form
$h_i = a_i x^{\beta + e_i}$ for some $\beta\in\ZZ^{n}$. If $\beta_i \geq 0$ an induction on $m$ shows that,
for all $m\geq 1$, we have 
\[
\delta^{m}(x_i) = b^{(i)}_{m} x^{m \beta + e_i}, \text{ \ where }
b^{(i)}_{m} = a_i \prod_{l = 1}^{m-1} ( l b + a_i ) \text{ \ and \ } 
b = \sum_{j = 1}^{n} a_j \beta_{j}.
\]
Assume  that $\beta_i\geq 0$ for all $i$. Since $\delta$ is locally nilpotent there is a minimal $m_i \geq 0$ 
such that $b^{(i)}_{m_i+1} = 0$. This implies $a_i = -m_i b$. Since $\delta \neq 0$, we get 
\[		
0 \neq b = \sum_{i=1}^n a_i \beta_i = - b \sum_{i=1}^n  m_i \beta_{i},	
\]
and so $\sum m_i \beta_i= -1$. But this is a contradiction, because $m_i,\beta_{i} \geq 0$ for all $i$. 
Therefore $a_{i}x^{\beta+ e_i}\neq 0$ implies that $\beta_{j}\geq 0$ for all $j\neq i$, and $\beta_{i}=-1$. 
Thus there is only one term in the sum, i.e. 
$\delta=a_{i}x^\gamma\frac{\partial}{\partial x_{i}}$ where $\gamma:=\beta+e_i$ has the claimed form.
\end{proof}

\begin{rem}\label{characters.rem}
This lemma can also be expressed in the following way:
{\it There is a bijective correspondence between the $D_{n}$-stable 
one-dimensional unipotent subgroups $U\subset \Gn$ and the characters of $D_{n}$ of the form 
$\lambda=\sum_{j}\lambda_{j}\eps_{j}$ where one $\lambda_{i}$ equals $1$ 
and the others are $\leq 0$.} We will denote  this set of characters by $\Xu$:
\[
\Xu:=\{\lambda=\sum\lambda_{j}\eps_{j}\mid \exists\, i \text{ such that }\lambda_{i}=1
\text{ and }\lambda_{j}\leq 0\text{ for }j\neq i\}.
\]
If $\lambda\in\Xu$, then $U_{\lambda}$ denotes the corresponding one-dimensional unipotent subgroup normalized by $D_{n}$.
\end{rem}

\begin{rem} 
In \cite[Theorem 1]{Li2011Roots-of-the-affin} \name{Alvaro Liendo} shows that the 
locally nilpotent derivations normalized by the torus $D_{n}':= D_{n}\cap \SL_{n}$ have exactly the same form.
\end{rem}

\begin{lem}\label{translation.lem}
The subgroup $\Tn$ of translations is the only commutative unipotent subgroup normalized by $\GL_{n}$.
\end{lem}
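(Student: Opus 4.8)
The plan is to prove both inclusions. That $\Tn$ qualifies is immediate: it is commutative and unipotent, and since $\Aff_{n}$ is the semidirect product of $\GL_{n}$ with $\Tn$, conjugation by $\GL_{n}$ maps $\Tn$ to itself. So the content is the uniqueness, and for this I would work entirely with Lie algebras of vector fields and reduce everything to the monomial classification of Lemma~\ref{stableunipot.lem}.

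Let $U$ be a commutative unipotent subgroup normalized by $\GL_{n}$. As recalled above, such a $U$ is a vector group with $\Aut(U)=\GL(U)$, so conjugation defines a linear representation $\GL_{n}\to\GL(U)$; identifying $U$ with its Lie algebra $\Lie U$, viewed as a space of commuting locally nilpotent vector fields on $\An$, this is the adjoint action, and $\Lie U$ is stable under bracketing with $\Lie\GL_{n}$, the space of linear fields $x_{a}\frac{\partial}{\partial x_{b}}$. Restricting to the torus $D_{n}\subset\GL_{n}$ I would decompose $\Lie U=\bigoplus_{\lambda}(\Lie U)_{\lambda}$ into weight spaces. A nonzero weight vector $\delta\in(\Lie U)_{\lambda}$ is a locally nilpotent field and a $D_{n}$-eigenvector, so $\CC\delta=\Lie U_{\delta}$ spans a one-dimensional unipotent subgroup normalized by $D_{n}$; Lemma~\ref{stableunipot.lem} then forces $\delta=c\,x^{\gamma}\frac{\partial}{\partial x_{i}}$ with $\lambda=\eps_{i}-\gamma\in\Xu$, in particular $\gamma_{i}=0$ and $\gamma_{j}\geq0$ for all $j$.

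The heart of the argument is to show that every such $\gamma$ is zero, i.e. that $\Lie U$ consists of constant fields only. Suppose some weight vector $\delta=c\,x^{\gamma}\frac{\partial}{\partial x_{i}}\in\Lie U$ has $\gamma_{j}\geq1$ for a $j\neq i$. Since $\Lie U$ is stable under the adjoint action of $\Lie\GL_{n}$, the field
\[
\eta:=\Bigl[x_{i}\frac{\partial}{\partial x_{j}},\,\delta\Bigr]=\gamma_{j}\,c\,x^{\gamma+\eps_{i}-\eps_{j}}\frac{\partial}{\partial x_{i}}-c\,x^{\gamma}\frac{\partial}{\partial x_{j}}
\]
again lies in $\Lie U$. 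But $\eta$ is a $D_{n}$-eigenvector (of weight $\eps_{j}-\gamma\notin\Xu$), it is nonzero because its two summands point in the distinct directions $\frac{\partial}{\partial x_{i}}$ and $\frac{\partial}{\partial x_{j}}$, and it is locally nilpotent as an element of $\Lie U$. This contradicts Lemma~\ref{stableunipot.lem}, according to which a locally nilpotent $D_{n}$-eigenfield must be a single monomial. Hence $\gamma=0$ throughout, so $\Lie U\subseteq\Lie\Tn$; and since $\Lie\Tn\cong\CC^{n}$ is the irreducible standard $\GL_{n}$-module, the nonzero submodule $\Lie U$ equals all of it, whence $U=\Tn$ (both are connected with the same Lie algebra).

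I expect the main obstacle to be the setup rather than the calculation: one must justify that $\Lie U$ is genuinely a $D_{n}$-diagonalizable, $\Lie\GL_{n}$-stable space of locally nilpotent vector fields, so that both the weight decomposition and the application of Lemma~\ref{stableunipot.lem} to $\eta$ are legitimate. This is exactly where unipotence of $U$ (each eigenfield is locally nilpotent) and its normalization by $\GL_{n}$ (adjoint-stability) enter; commutativity is used only for the clean identification of $U$ with the representation $\Lie U$. The bracket computation itself is routine.
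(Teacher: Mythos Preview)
Your argument is correct, but it follows a different path from the paper's. The paper works globally with $\GL_{n}$-representation theory: it observes that the highest weight of any $\GL_{n}$-irreducible summand of $U$ must lie in $\Xu$, then shows that for any such dominant $\lambda\neq\eps_{1}$ the irreducible module $V_{\lambda}$ already contains a strictly smaller dominant weight $\lambda'\notin\Xu$, forcing $U$ to be a sum of copies of the standard representation; comparing $D_{n}$-stable one-dimensional unipotent subgroups then gives $U=\Tn$. Your proof instead stays infinitesimal and computational: after the $D_{n}$-weight decomposition you bracket a single offending weight vector $cx^{\gamma}\partial_{i}$ with one root vector $x_{i}\partial_{j}\in\Lie\GL_{n}$ and obtain a $D_{n}$-eigenvector in $\Lie U$ whose weight $\eps_{j}-\gamma$ lies outside $\Xu$ and whose two-term form contradicts Lemma~\ref{stableunipot.lem} directly. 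This avoids any appeal to highest-weight theory, at the cost of a small explicit calculation and the need to justify that $\Lie U$ consists of locally nilpotent derivations and is stable under $\operatorname{ad}(\Lie\GL_{n})$; both points are indeed standard for an algebraic unipotent $U$ normalized by an algebraic group inside $\Gn$, and you flag them appropriately.
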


\begin{proof} If $U \subset \Gn$ is a commutative unipotent subgroup normalized by $\GL_{n}$, then all the 
weights of the representation of $\GL_{n}$ on $T_{e}U\simeq U$ must belong to $X_{u}(D_{n})$.  
The dominant weights of $\GL_{n}$ are $\sum_{i}\lambda_{i}\eps_{i}$ where 
$\lambda_{1}\geq \lambda_{2}\geq \cdots\geq \lambda_{n}$, and only 
those of the form $\lambda=\eps_{1}+\sum_{i>1}\lambda_{i}\eps_{i}$ where 
$0\geq \lambda_{2}\geq\cdots\geq \lambda_{n}$  occur  in $\Xu$. If $\lambda\neq \eps_{1}$, 
i.e. $\lambda=\eps_{1}+ \lambda_{k}\eps_{k}+\lambda_{k+1}\eps_{k+1}+\cdots$ 
where $\lambda_{k}<0$, then the weight $\lambda':=(\lambda_{k}+1)\eps_{k}+\lambda_{k+1}\eps_{k+1}+\cdots$ 
is dominant and $\lambda'\prec\lambda$. Therefore  $\lambda'$ appears in the irreducible 
representation of $\GL_{n}$ of highest weight $\lambda$, but $\lambda'\notin\Xu$. Thus $U$ 
and $\Tn$ are isomorphic as $\GL_n$-modules, hence contain the same $D_n$-stable 
one-dimensional unipotent subgroups, and so $U=\Tn$.
\end{proof}

\par\medskip
\subsection{Maximal tori}
It is clear that $D_{n} \subset \Gn$ is a maximal commutative subgroup of $\Gn$ since it coincides with its centralizer, see Lemma~\ref{lem1}. Moreover, \name{Bia{\l}ynicki-Birula} proved in \cite{Bi1966Remarks-on-the-act} that a faithful action of an $n$-dimensional torus on $\AA^n$ is linearizable (cf. \cite[Chap. I.2.4, Theorem 5]{KrSc1992Reductive-group-ac}).Thus we have the following result.

\begin{lem} \label{diagonalizable.lem}
$D_{n}$ is a maximal commutative subgroup of $\Gn$. 
Moreover, every algebraic subgroup of $\Gn$, which is isomorphic 
to $D_{n}$ is conjugate to $D_{n}$.
\end{lem}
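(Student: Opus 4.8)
The plan is to treat the two assertions separately: the first is a short formal consequence of Lemma~\ref{lem1}, while the second rests on the linearization theorem of \name{Bia{\l}ynicki-Birula} quoted just above.

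For the maximality statement, I would show that $D_{n}$ coincides with its own centralizer in $\Gn$; a subgroup that equals its centralizer is automatically a maximal commutative subgroup, since any commutative subgroup containing it must lie inside its centralizer. To get $\C_{\Gn}(D_{n}) = D_{n}$, note that $\mu_{2} \subset D_{n}$, so any element centralizing $D_{n}$ also centralizes $\mu_{2}$; hence $\C_{\Gn}(D_{n}) \subseteq \C_{\Gn}(\mu_{2}) = D_{n}$ by Lemma~\ref{lem1}, while the reverse inclusion holds because $D_{n}$ is commutative. This gives $\C_{\Gn}(D_{n}) = D_{n}$ and thus maximality.

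For the conjugacy statement, let $T \subset \Gn$ be an algebraic subgroup with $T \simeq D_{n}$ as algebraic groups, so that $T$ is an $n$-dimensional algebraic torus. By the discussion of algebraic subgroups above, $T$ acts faithfully and algebraically on $\An$ (faithfulness being automatic since $\Gn = \Aut(\An)$). I would then invoke \name{Bia{\l}ynicki-Birula}'s theorem: a faithful action of an $n$-dimensional torus on $\An$ is linearizable, so there is $\g \in \Gn$ with $\g \circ T \circ \g^{-1} \subseteq \GL_{n}$. The conjugate $\g \circ T \circ \g^{-1}$ is then an $n$-dimensional subtorus of $\GL_{n}$, hence a maximal torus, and so it is conjugate to the diagonal torus $D_{n}$ by some $h \in \GL_{n}$. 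Composing, $(h \circ \g) \circ T \circ (h \circ \g)^{-1} = D_{n}$, which shows that $T$ is conjugate to $D_{n}$ in $\Gn$.

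The only substantial input is the linearization theorem, which I treat as a black box; everything else is formal, relying on Lemma~\ref{lem1} and the conjugacy of maximal tori in $\GL_{n}$. The point to watch is that the hypothesis ``algebraic subgroup'' is precisely what guarantees the $T$-action on $\An$ to be genuinely algebraic, so that \name{Bia{\l}ynicki-Birula}'s result applies; no real obstacle remains once this is in place.
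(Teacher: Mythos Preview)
Your argument is correct and matches the paper's approach exactly: the paper does not give a separate proof but states, just before the lemma, that maximality follows because $D_{n}$ coincides with its centralizer (Lemma~\ref{lem1}) and that conjugacy follows from \name{Bia{\l}ynicki-Birula}'s linearization theorem. Your write-up simply makes explicit the final step through conjugacy of maximal tori in $\GL_{n}$, which the paper leaves implicit in the word ``linearizable''.
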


Now let $G\subset \Gn$ be an algebraic subgroup which is normalized by $D_{n}$. Then the non-zero weights 
of the representation of $D_{n}$ on the Lie algebra $\Lie G$ belong to $\Xu$, and the weight spaces are 
one-dimensional. It follows that the non-zero weight spaces of $\Lie G$ are in bijective correspondence with 
the $D_{n}$-stable one-dimensional unipotent subgroups of $G$.

\begin{lem}\label{dimension.lem}
Let $G \subset \Gn$ be an algebraic subgroup normalized by a torus $D \subset \Gn$ of dimension $n$, let $U_{1},
\ldots,U_{r}$ be the $D$-stable one-dimensional unipotent subgroups of $G$, and put $X:= U_{1}\circ \cdots \circ U_{r}\subset G$.
\be
\item If $G$ is unipotent, then $G = X \circ X$ and $\dim G = r$.
\item If $D \subset G$, then $G^{0} = D\circ X \circ D \circ X$ and $\dim G = r + n$.
\ee
\end{lem}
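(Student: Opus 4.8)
The plan is to read off both parts from the $D$-weight decomposition of $\Lie G$ and then invoke Lemma~\ref{alggp.lem}. First I would reduce to the case $D=D_{n}$: since $D$ is an $n$-dimensional torus it is isomorphic to $D_{n}$, hence conjugate to $D_{n}$ by Lemma~\ref{diagonalizable.lem}, and every assertion in the statement is invariant under simultaneous conjugation of $G$, $D$ and the $U_{i}$. With $D=D_{n}$ the observation preceding the lemma applies: $\Lie G$ is a $D$-module whose non-zero weights lie in $\Xu$, each non-zero weight space is one-dimensional, and these weight spaces are precisely $\Lie U_{1},\ldots,\Lie U_{r}$. Thus $\Lie G = (\Lie G)^{D}\oplus \Lie U_{1}\oplus\cdots\oplus \Lie U_{r}$, where $(\Lie G)^{D}$ is the zero-weight (i.e. $D$-invariant) part.

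Second, I would pin down the zero-weight space $(\Lie G)^{D}$ in each case. A $D_{n}$-invariant vector field $\sum_{i}h_{i}\,\partial/\partial x_{i}$ must have each $h_{i}$ a scalar multiple of $x_{i}$, so $(\Lie \Gn)^{D_{n}}=\Lie D_{n}=\spa(x_{1}\partial/\partial x_{1},\ldots,x_{n}\partial/\partial x_{n})$, every non-zero element of which is semisimple. In case (a) the group $G$ is unipotent, so each element of $\Lie G$ is a locally nilpotent vector field; as $(\Lie G)^{D}\subseteq \Lie D_{n}$ and $\Lie D_{n}$ has no non-zero locally nilpotent element, we conclude $(\Lie G)^{D}=0$ and hence $\dim G = \dim \Lie G = r$. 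In case (b), $D\subseteq G$ gives $\Lie D\subseteq (\Lie G)^{D}\subseteq \Lie D_{n}=\Lie D$, so $(\Lie G)^{D}=\Lie D$ and $\dim G = r+n$.

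Third, for the product formulas I would use the multiplication morphisms together with Lemma~\ref{alggp.lem}. In case (a), consider $m\colon U_{1}\times\cdots\times U_{r}\to G$, $(u_{1},\ldots,u_{r})\mapsto u_{1}\circ\cdots\circ u_{r}$. Its image $X$ is irreducible, constructible (Chevalley) and bounded (a product of boundedly many elements of bounded degree has bounded degree), and its differential at the identity is the sum map $\Lie U_{1}\oplus\cdots\oplus \Lie U_{r}\to \Lie G$, which is an isomorphism by the first paragraph; hence $X$ is dense in the irreducible (because unipotent, hence connected) group $G$, and Lemma~\ref{alggp.lem} yields $G=X\circ X$. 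In case (b) I would run the same argument with $D\times U_{1}\times\cdots\times U_{r}\to G^{0}$, $(\bd,u_{1},\ldots,u_{r})\mapsto \bd\circ u_{1}\circ\cdots\circ u_{r}$, whose differential at the identity is the isomorphism $\Lie D\oplus \Lie U_{1}\oplus\cdots\oplus \Lie U_{r}\simto \Lie G$; thus $D\circ X$ is dense, bounded and constructible in $G^{0}$, and Lemma~\ref{alggp.lem} gives $G^{0}=(D\circ X)\circ(D\circ X)=D\circ X\circ D\circ X$.

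The delicate points are the correct determination of the zero-weight space---this is where cases (a) and (b) genuinely diverge, and where one must exploit that a unipotent Lie algebra has no non-zero semisimple element while $\Lie D_{n}$ has no non-zero nilpotent one---and the verification that the relevant multiplication map has surjective differential at the identity, which is exactly the direct-sum decomposition of $\Lie G$ into its $D$-weight spaces. Everything else (irreducibility of the images, and the constructibility and boundedness needed to apply Lemma~\ref{alggp.lem}) is routine.
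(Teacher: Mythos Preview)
Your proof is correct and follows essentially the same approach as the paper: identify $\Lie G$ via its $D$-weight decomposition, deduce that the multiplication map from $U_{1}\times\cdots\times U_{r}$ (respectively $D\times U_{1}\times\cdots\times U_{r}$) into $G$ is dominant, and then apply Lemma~\ref{alggp.lem}. You make explicit two steps the paper leaves implicit---the reduction to $D=D_{n}$ by conjugacy and, more importantly, the determination of the zero-weight space $(\Lie G)^{D}$ in each case---which is a welcome clarification rather than a different route.
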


\begin{proof} 
(a) The canonical map $U_{1}\times\cdots\times U_{r} \to G$ is dominant, and so $X \subset G$ 
is constructible and dense. Thus $X \circ X = G$, by Lemma~\ref{alggp.lem}, and $\dim G = \dim \Lie G = r$.

(b) Similarly, we see that $D\circ X \subset G^{0}$ is constructible and dense, and therefore  
$D\circ X \circ D \circ X = G^{0}$, and $\dim G = \dim \Lie G = \dim \Lie D + r$.
\end{proof}

\par\medskip
\subsection{Images of algebraic subgroups}
The next two propositions are crucial for the proof of our main theorem.

\begin{prop}\label{torusunipot.prop}
Let $\theta$ be an automorphism of $\Gn$. Then
\be
\item $D:=\theta(D_{n})$ is a torus of dimension $n$, conjugate to 
$D_{n}$.
\item If $U$ is a $D_n$-stable unipotent subgroup, then
$\theta(U)$ is a $D$-stable unipotent subgroup of the same dimension.
\item $\TTT:=\theta(\Tn)$ is a commutative unipotent subgroup of
dimension $n$, normalized by $D$, and the representation of 
$D$ on $\TTT$ is faithful.
\ee
\end{prop}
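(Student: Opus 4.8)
The plan is to prove the three statements in order, each building on the previous. For part (a), the key tool is Lemma~\ref{crucial.prop} together with the intrinsic characterization of $D_n$ as a maximal commutative subgroup (Lemma~\ref{diagonalizable.lem}). Since $\theta$ is a group automorphism, it preserves all group-theoretic properties. The idea is to pin down $D_n$ abstractly. First, I would locate an element $\sigma\in D_n$ of order $2$ generating a subgroup $\mu\cong\mu_2$; then Lemma~\ref{crucial.prop} says $\C_{\Gn}(\mu)$ is diagonalizable of dimension $\le n$. The image $\theta(\mu)$ is again a copy of $\mu_2$, and $\theta$ carries centralizers to centralizers, so $\C_{\Gn}(\theta(\mu)) = \theta(\C_{\Gn}(\mu))$ is diagonalizable of dimension $\le n$ as well. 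Applying this to a suitable family of involutions whose centralizers intersect in $D_n$, one shows $D:=\theta(D_n)$ is contained in a diagonalizable algebraic group, hence is itself an algebraic torus; comparing dimensions via the chain $\mu_2\subset\cdots$ forces $\dim D = n$. Finally Lemma~\ref{diagonalizable.lem} gives that an $n$-dimensional torus in $\Gn$ is conjugate to $D_n$, yielding (a).

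For part (b), let $U$ be a $D_n$-stable unipotent subgroup. Since $\theta$ is a group automorphism, $\theta(U)$ is normalized by $\theta(D_n)=D$. The content to establish is that $\theta(U)$ is again unipotent and of the same dimension. The natural approach is to characterize unipotence and dimension group-theoretically so that they transfer under $\theta$. Unipotence of a one-dimensional subgroup $U_\delta$ can be detected by its being a divisible torsion-free abelian group isomorphic to $(\CC,+)$, a purely algebraic property preserved by $\theta$; by Remark~\ref{characters.rem} the $D_n$-stable one-dimensional unipotent subgroups correspond to characters in $\Xu$, and the general $D_n$-stable $U$ is built from these. For the dimension claim I would use Lemma~\ref{dimension.lem}: the dimension of a $D$-stable unipotent group equals the number $r$ of $D$-stable one-dimensional unipotent subgroups it contains, and $\theta$ induces a bijection on these subgroups, preserving $r$.

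For part (c), apply (b) to the specific group $U=\Tn$, which is $D_n$-stable and commutative unipotent of dimension $n$. Then $\TTT=\theta(\Tn)$ is $D$-stable, commutative (since $\theta$ is an isomorphism), unipotent, and of dimension $n$ by (b). The remaining and most delicate point is that the representation of $D$ on $\TTT$ is \emph{faithful}. Here the key obstacle lies: faithfulness is not automatic from the preceding parts, since a priori some subtorus of $D$ could act trivially on $\TTT$. I expect this to be the main difficulty. The plan is to argue by contradiction: if a nontrivial subtorus $D'\subset D$ acted trivially on $\TTT$, then $D'$ would centralize $\TTT$, and pulling back through $\theta^{-1}$ one obtains a nontrivial subtorus of $D_n$ centralizing $\Tn$. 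But the representation of $D_n=\GL_1^n$ on $\Tn$ by conjugation has weights $\eps_1,\ldots,\eps_n$ (the translations transform by the standard characters), which are linearly independent, so $D_n$ acts faithfully on $\Tn$. This contradiction, transported through $\theta$, forces the action of $D$ on $\TTT$ to be faithful, completing the proof.
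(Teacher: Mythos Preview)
Your outline has the right overall shape, but there are two concrete gaps where the paper's argument supplies an idea that your proposal does not.

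\textbf{Part (a).} You write ``locate an element $\sigma\in D_{n}$ of order $2$ generating a subgroup $\mu\cong\mu_{2}$''. In the paper's notation $\mu_{2}\simeq(\ZZ/2)^{n}$, so it is not cyclic and no single involution generates it. More importantly, the paper's route is much shorter than your proposed ``family of involutions'' argument: by Lemma~\ref{lem1} one has $D_{n}=\C_{\Gn}(\mu_{2})$ \emph{on the nose}, hence $D=\theta(D_{n})=\C_{\Gn}(\theta(\mu_{2}))$ is immediately covered by Proposition~\ref{crucial.prop}. Your sketch also does not explain why $D$ is \emph{connected}. The paper handles this by writing $D=D^{0}\times F$ and observing that for $k$ prime to $|F|$ the image $\theta(\mu_{k})\simeq(\ZZ/k)^{n}$ must land in $D^{0}$, forcing $\dim D^{0}=n$ and thus $F=1$.

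\textbf{Part (b).} Here is the genuine gap. You propose to detect unipotence via the abstract group property ``divisible torsion-free abelian, isomorphic to $(\CC,+)$''. But $(\CC,+)$ as an abstract group is just a $\QQ$-vector space of continuum dimension; this characterization does not tell you that $\theta(U)$ is an \emph{algebraic} subgroup of $\Gn$, and without that you cannot speak of unipotence in the relevant sense, nor invoke Lemma~\ref{dimension.lem}. The paper's key observation, which your proposal is missing, is that a one-dimensional $D_{n}$-stable unipotent $U$ consists of exactly two $D_{n}$-orbits, namely $U\setminus\{\id\}$ and $\{\id\}$. Hence $\theta(U)\setminus\{\id\}$ is a single $D$-orbit; since $D$ is an algebraic torus by (a), this orbit is bounded constructible, so Lemma~\ref{alggp.lem} forces $\theta(U)$ to be an algebraic group. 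Only then do ``no torsion'' and ``two $D$-orbits'' give unipotence and dimension one. The general case is then built from the one-dimensional case via $X=U_{1}\circ\cdots\circ U_{r}$ and Lemma~\ref{dimension.lem}(a), using $\theta^{-1}$ for the reverse inequality on dimensions.

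\textbf{Part (c).} Your faithfulness argument is correct and essentially equivalent to the paper's one-line justification (that $\Tn$ contains a $D_{n}$-orbit with trivial stabilizer, a group-theoretic condition that $\theta$ preserves). So no issue there.
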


\begin{proof}
(a) We have $D_{n}=\C_{\Gn}(\mu_{2})$, by Lemma~\ref{lem1}, and thus 
$D=\theta(D_{n})=\C_{\Gn}(\theta(\mu_{2}))$. Proposition~\ref{crucial.prop} implies 
that $D$ is a diagonalizable algebraic subgroup with $\dim D \leq n$, hence  $D = D^0 \times F$ for some finite 
group $F$. If $k$ is prime to the order of $F$, then $\theta(\mu_{k}) \subset D^{0}$ and so
$\dim D^{0}= n$, because $\mu_{k}\simeq (\ZZ/k)^{n}$. Hence $D = D^{0}$ is an $n$-dimensional torus
which is conjugate to $D_{n}$, by Lemma~\ref{diagonalizable.lem}.

(b) First assume that  $\dim U=1$. Then $U$ consists of two $D_{n}$-orbits, 
$O:=U \setminus\{\id\}$ and $\{\id\}$. It follows that 
$\theta(U)$ consists of the two $D$-orbits $\theta(O)$ and $\{\id\}$, and so 
$\theta(U)$ is bounded constructible and thus a commutative
algebraic group normalized by $D$. Since it does not contain elements of 
finite order it is unipotent, and since it consists of only two $D$-orbits it is of 
dimension 1.

Now let $U$ be arbitrary,  $\dim U = r$, and let  $U_1, \ldots, U_r$ be the different   $D_{n}$-stable 
one-dimensional unipotent subgroups of $U$. Then  $X:=U_{1}\circ U_{2}\circ \cdots\circ U_{r} \subset  U$ is 
dense and constructible and $U = X \circ X$, by Lemma~\ref{dimension.lem}(a). Applying $\theta$ implies
 that $\theta(X) = \theta(U_{1})\circ\cdots\circ\theta(U_{r})$ is bounded constructible and connected, as well 
 as $\theta(U)=\theta(X)\circ\theta(X)$, and thus $\theta(U)$ is a connected algebraic subgroup of $\Gn$ 
 normalized by $D$. Since every element of $\theta(U)$ has infinite order, $\theta(U)$ must be unipotent. 
 Moreover, $\dim \theta(U)\geq r$, since $\theta(U)$ contains the $D$-stable one-dimensional unipotent 
 subgroups $\theta(U_{i})$, $i=1,\ldots,r$. The same argument applied to $\theta^{-1}$ finally gives $\dim \theta(U) = r$.
 
(c)  This statement follows from (b) and the fact that $\Tn$ contains a dense $D_{n}$-orbit with trivial stabilizer.
\end{proof}

The same arguments, this time using Lemma~\ref{dimension.lem}(b), gives the next result.

\begin{prop}\label{algsubgroups.prop}
Let  $\theta$ be an automorphism of $\Gn$ and let 
$G \subset \Gn$ be an algebraic subgroup 
which contains a torus $D$ of dimension $n$.
\be
\item The image $\theta(G)$ is an algebraic subgroup of $\Gn$ of the same dimension
$\dim G$. 
\item We have  $\theta(G^{0})=\theta(G)^{0}$. In particular, $\theta(G)$ is connected if $G$  is connected.
\item  If $G$ is reductive, then so is $\theta(G)$, and then $\theta(G)$ is conjugate to a closed 
subgroup of $\GL_{n}$.
\ee
\end{prop}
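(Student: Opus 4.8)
The plan is to mirror the proof of Proposition~\ref{torusunipot.prop}, replacing Lemma~\ref{dimension.lem}(a) by Lemma~\ref{dimension.lem}(b). Since $D$ is connected it lies in $G^{0}$, and being a subgroup of $G$ it normalizes $G$; hence Lemma~\ref{dimension.lem}(b) applies. Writing $U_{1},\dots,U_{r}$ for the $D$-stable one-dimensional unipotent subgroups of $G$ and $X:=U_{1}\circ\cdots\circ U_{r}$, we have $G^{0}=D\circ X\circ D\circ X$ and $\dim G=r+n$.

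For (a) and (b) I would apply $\theta$. By Proposition~\ref{torusunipot.prop}(a) the image $D':=\theta(D)$ is an $n$-dimensional torus, and by Proposition~\ref{torusunipot.prop}(b) each $U_{i}':=\theta(U_{i})$ is a $D'$-stable one-dimensional unipotent subgroup. Thus $\theta(X)=U_{1}'\circ\cdots\circ U_{r}'$ is bounded constructible and connected, and $\theta(G^{0})=D'\circ\theta(X)\circ D'\circ\theta(X)$ is bounded constructible as a finite product of such sets. Being a subgroup, it is then a connected algebraic subgroup normalized by $D'$ by Lemma~\ref{alggp.lem}. As it contains $D'$ and the $r$ distinct subgroups $U_{i}'$, Lemma~\ref{dimension.lem}(b) gives $\dim\theta(G^{0})\geq r+n$, and the same reasoning for $\theta^{-1}$ forces equality, so $\dim\theta(G^{0})=r+n=\dim G$. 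Writing $G=\bigsqcup_{i}g_{i}G^{0}$ as a finite union of cosets, $\theta(G)=\bigsqcup_{i}\theta(g_{i})\theta(G^{0})$ is a finite union of translates of the algebraic group $\theta(G^{0})$, hence bounded constructible, and Lemma~\ref{alggp.lem} makes it an algebraic subgroup of dimension $\dim\theta(G^{0})=\dim G$; this is (a). Since $\theta(G^{0})$ is connected of finite index in $\theta(G)$, it is the identity component $\theta(G)^{0}$, which is (b).

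For (c) I would first show $\theta$ preserves reductivity. The unipotent radical $V:=R_{u}(\theta(G)^{0})$ is characteristic in $\theta(G)^{0}$, hence normal in $\theta(G)$ and stable under $D'\subset\theta(G)^{0}$. Applying $\theta^{-1}$, which carries $D'$-stable unipotent subgroups to $D$-stable unipotent subgroups of the same dimension, produces a connected normal unipotent subgroup $\theta^{-1}(V)\trianglelefteq G^{0}$; reductivity of $G$ forces $\theta^{-1}(V)=\{e\}$, so $V=\{e\}$ and $\theta(G)$ is reductive. Now $D'$ is a maximal torus of $\theta(G)$, being a maximal commutative subgroup of $\Gn$ by Lemma~\ref{diagonalizable.lem}, so after conjugating we may assume $D'=D_{n}$. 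The crux is the combinatorial identity $\Xu\cap(-\Xu)=\{\eps_{i}-\eps_{j}\mid i\neq j\}$: the roots of $\theta(G)^{0}$ relative to $D_{n}$ lie in $\Xu$ and, coming in pairs $\pm\lambda$ for a reductive group, must lie in this intersection, which is exactly the root set of $\GL_{n}$. The corresponding root subgroups $U_{\eps_{i}-\eps_{j}}$ are the elementary transformations $x_{i}\mapsto x_{i}+s\,x_{j}$ inside $\GL_{n}$, so $\theta(G)^{0}$ is generated by $D_{n}$ and subgroups of $\GL_{n}$, giving $\theta(G)^{0}\subseteq\GL_{n}$.

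To place the remaining components inside $\GL_{n}$, given $g\in\theta(G)$ I would use that $g$ normalizes $\theta(G)^{0}$, so $D_{n}$ and $gD_{n}g^{-1}$ are maximal tori of $\theta(G)^{0}\subseteq\GL_{n}$, hence conjugate by some $h\in\theta(G)^{0}$; then $hg$ normalizes $D_{n}$ in $\Gn$, and since $N_{\Gn}(D_{n})$ consists of monomial maps it lies in $\GL_{n}$, whence $g\in\GL_{n}$. I expect the substantive points to be the preservation of reductivity and the combinatorial identity $\Xu\cap(-\Xu)=\{\eps_{i}-\eps_{j}\}$ that forces the root system into $\GL_{n}$; the main remaining obstacle is the bookkeeping for disconnected $G$, which rests on verifying $N_{\Gn}(D_{n})\subseteq\GL_{n}$, i.e.\ ruling out nonlinear normalizers of the diagonal torus.
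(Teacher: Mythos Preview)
Your treatment of (a) and (b) follows the paper's proof essentially verbatim: both decompose $G^{0}$ via Lemma~\ref{dimension.lem}(b), push through $\theta$ using Proposition~\ref{torusunipot.prop}, and count dimensions via $\theta^{-1}$. The reductivity clause in (c) is also argued the same way in spirit---the paper states it symmetrically (``if $G$ contains a normal unipotent subgroup $U$, then $\theta(U)$ is normal unipotent in $\theta(G)$''), and you unwind the contrapositive explicitly.

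Where you genuinely diverge is in the linearization step of (c). The paper invokes an external result: a reductive subgroup of $\Gn$ containing an $n$-dimensional torus has no non-constant invariants on $\An$, and is therefore linearizable by \cite[Proposition~5.1]{KrPo1985Semisimple-group-a}. You instead give a self-contained root-system argument: the roots of $\theta(G)^{0}$ relative to $D_{n}$ lie in $\Xu$, and reductivity forces them into $\Xu\cap(-\Xu)=\{\eps_{i}-\eps_{j}\mid i\neq j\}$, so the root subgroups already sit in $\GL_{n}$ and generate $\theta(G)^{0}$ together with $D_{n}$. The combinatorial identity is correct and the argument is clean. Your handling of the non-identity components via $N_{\Gn}(D_{n})$ is also sound; the claim you flag as an obstacle---that any $\g\in\Gn$ normalizing $D_{n}$ is a monomial linear map---follows readily: such a $\g$ intertwines the $D_{n}$-action on $\An$ up to an automorphism of $D_{n}$, hence fixes the origin and permutes the coordinate hyperplanes, forcing $\g^{*}(x_{i})=c_{i}x_{\sigma(i)}$.

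In short, the paper's route is shorter but imports a linearization theorem; yours is more elementary and stays within the toolkit already developed in the paper, at the cost of a longer argument and the extra bookkeeping for disconnected $G$.
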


\begin{proof}
As above, let  $U_1, \ldots, U_r$ be the different   $D$-stable 
one-dimensional unipotent subgroups of $G$, and put $X:=U_{1}\circ\cdots\circ U_{r}$. Then
$D \circ X$ is constructible in $G^{0}$, and $D \circ X\circ D \circ X=G^{0}$, by Lemma~\ref{dimension.lem}(b).
Applying $\theta$ we see that $\theta(D \circ X\circ D \circ X) =\theta(D)\circ\theta(X)\circ\theta(D)\circ\theta(X)$ 
is bounded constructible and connected, and so $\theta(G^{0})$ is a connected algebraic subgroup of $\Gn$, 
of finite index in $\theta(G)$. Since the $\theta(U_{i})$ are different $\theta(D)$-stable one-dimensional unipotent 
subgroups of $\theta(G)$ we have $\dim\theta(G) \geq \dim\theta(D) + r=\dim G$. Using $\theta^{-1}$ we get equality. 
This proves (a) and (b).

For (c) we remark that if $G$ contains a normal unipotent subgroup $U$, then $\theta(U)$ is a normal unipotent 
subgroup of $\theta(G)$. Moreover, a reductive subgroup $G$ containing a torus of dimension $n$ has no 
non-constant invariants, and so  $G$ is linearizable (see \cite[Proposition~5.1]{KrPo1985Semisimple-group-a}).
\end{proof}

\par\medskip
\subsection{Proof of the Main Theorem}\label{proof.subsec}
Let $\theta$ be an automorphism of $\Gn$. It follows from Proposition~\ref{algsubgroups.prop} 
that there is a $\g\in\Gn$ such that $\g\circ\theta(\GL_{n})\circ\g^{-1}\subset \GL_{n}$. Therefore 
we can assume that $\theta(\GL_{n})=\GL_{n}$. The subgroup $\Tn$ of translations is the only 
commutative unipotent subgroup normalized by $\GL_{n}$, by Lemma~\ref{translation.lem}. 
Therefore, $\theta(\Tn)=\Tn$ and so $\theta(\Aff_{n})=\Aff_{n}$. Now the theorem follows from the next proposition.
\qed

\begin{prop}\label{autaff.prop}
\be 
\item Every automorphism $\theta$ of $\Aff_{n}$ 
with	$\theta(\GL_{n}) = \GL_{n}$ and $\theta(\Tn) = \Tn$ is of the 
form $\theta (\f) = \tau(\g\circ \f \circ \g^{-1})$ where $\g \in \GL_{n}$ and 
$\tau$ is an automorphism of the field $\CC$.
\item If $\theta$ is an automorphism of $\Gn$ such 
that $\theta|_{\Aff_{n}}=\Id_{\Aff_{n}}$, then $\theta|_{\Jn}=\Id_{\Jn}$.
\ee
\end{prop}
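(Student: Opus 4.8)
The plan is to prove the two parts separately, starting with the structure theory of $\Aff_n$ for part (a).

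\medskip

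\emph{Proof plan for (a).}
The group $\Aff_n=\GL_n\ltimes\Tn$ is determined by the $\GL_n$-module structure of $\Tn$, which is the standard representation on $\CC^n$. Given $\theta$ preserving both $\GL_n$ and $\Tn$, I would first analyze the restriction $\theta|_{\GL_n}$, an abstract automorphism of $\GL_n(\CC)$. The classical theory of automorphisms of $\GL_n$ over a field tells us that every abstract automorphism is, up to an inner automorphism and a field automorphism $\tau$ of $\CC$, either the identity or the inverse-transpose map $A\mapsto (A^t)^{-1}$. After composing $\theta$ with a suitable inner automorphism by some $\g\in\GL_n$ and a field automorphism $\tau$, I would reduce to the case where $\theta|_{\GL_n}$ is either $\id$ or $A\mapsto(A^t)^{-1}$. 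The second possibility must then be ruled out using the compatibility with the action on $\Tn$: the conjugation action of $\GL_n$ on $\Tn\simeq\CC^n$ is the standard representation, and $\theta$ must intertwine this action with itself, meaning the $\GL_n$-module $\Tn$ is isomorphic (via $\tau$) to its $\theta|_{\GL_n}$-twist. Since the inverse-transpose automorphism carries the standard representation to its dual, which is not isomorphic to the standard representation for $n\geq 2$, this case is excluded. Once $\theta|_{\GL_n}$ is the identity up to $(\g,\tau)$, the action on $\Tn$ is forced: $\theta|_{\Tn}$ must be a $\GL_n$-equivariant $\tau$-semilinear automorphism of $\CC^n$, and since the standard representation is irreducible, such a map is a $\tau$-semilinear scalar, which can be absorbed into the inner automorphism by a diagonal element. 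This pins down $\theta(\f)=\tau(\g\circ\f\circ\g^{-1})$ on all of $\Aff_n$.

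\medskip

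\emph{Proof plan for (b).}
Here $\theta$ is an automorphism of the whole group $\Gn$ that restricts to the identity on $\Aff_n$, and I must show it is also the identity on the Jonqui\`ere subgroup $\Jn$. The strategy is to use the $D_n$-stable one-dimensional unipotent subgroups $U_\lambda$ indexed by characters $\lambda\in\Xu$, as classified in Lemma~\ref{stableunipot.lem} and Remark~\ref{characters.rem}. Since $\theta$ fixes $\Aff_n$ pointwise, it fixes $D_n$ pointwise; hence for each such subgroup $U_\lambda$, the image $\theta(U_\lambda)$ is again a $D_n$-stable one-dimensional unipotent subgroup, so $\theta(U_\lambda)=U_{\lambda'}$ for some $\lambda'\in\Xu$. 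The conjugation formula $\bd\circ\delta(s)\circ\bd^{-1}=\delta(t^{e_i-\gamma}s)$ shows that $D_n$ acts on $U_\lambda$ through the character $\lambda$; since $\theta$ is the identity on $D_n$, equivariance forces $\lambda'=\lambda$, so $\theta(U_\lambda)=U_\lambda$. Thus $\theta$ preserves each $U_\lambda$, acting on it by some (a priori nontrivial) automorphism of the one-parameter group. I would next use that $U_\lambda$ is also normalized by part of $\Aff_n$ (in particular by translations and by $\GL_n$) on which $\theta$ is the identity, together with the relations these subgroups impose, to show $\theta$ acts trivially on each $U_\lambda$ — for instance, by exhibiting a nontrivial commutator or conjugation relation with a fixed element of $\Aff_n$ that determines the one-parameter scaling. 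Finally, since $\Jn$ is generated by $D_n$ together with the $D_n$-stable unipotent subgroups $U_\lambda$ it contains (these correspond to the triangular generators $x_i\mapsto x_i+cx^\gamma$ spanning $\Jn$ modulo $D_n$), fixing all of them pointwise gives $\theta|_{\Jn}=\Id_{\Jn}$.

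\medskip

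\emph{Main obstacle.}
The hardest step is showing that $\theta$ acts \emph{trivially}, not merely by preserving each one-dimensional subgroup $U_\lambda$: knowing $\theta(U_\lambda)=U_\lambda$ only says $\theta$ restricts to some automorphism $\delta(s)\mapsto\delta(c_\lambda s)$ of each, and ruling out a nontrivial scalar $c_\lambda$ requires genuine relations in $\Gn$ linking the unipotent generators to the fixed subgroup $\Aff_n$. The key will be to find, for each $U_\lambda$, an element of $\Aff_n$ (fixed by $\theta$) whose commutator or product with a generator of $U_\lambda$ lands in a second subgroup on which the scaling is already determined, forcing $c_\lambda=1$; pinning down these relations compatibly across all $\lambda\in\Xu$ is where the real work lies.
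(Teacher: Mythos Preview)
Your plan for (b) is essentially the paper's own argument: show each $D_n$-stable one-dimensional unipotent $U_\lambda$ is sent to itself, then show the induced scalar $c_\lambda$ is $1$, then conclude since these $U_\lambda$ together with $D_n$ generate $\Jn$. Two remarks. First, the step ``equivariance forces $\lambda'=\lambda$'' is not quite immediate, because $\theta|_{U_\lambda}$ is a priori only an abstract group isomorphism $\CC^+\to\CC^+$: the relation $\theta(\lambda(\bd)\cdot u)=\lambda'(\bd)\cdot\theta(u)$ only gives $\ker\lambda=\ker\lambda'$, hence $\lambda'=\pm\lambda$; the paper rules out the minus sign by observing that $\lambda,-\lambda\in\Xu$ forces $\lambda=\eps_i-\eps_j$, so $U_\lambda\subset\GL_n$ where $\theta$ is already the identity. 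Second, for the ``main obstacle'' you correctly anticipate the mechanism; the paper's concrete realization is to conjugate $(x_1,\ldots,x_i+x^\gamma,\ldots,x_n)\in U_\lambda$ by the translation $\bt\colon x\mapsto x-\sum_{j\neq i}e_j$, which replaces $x^\gamma$ by $\prod_{j\neq i}(x_j+1)^{\gamma_j}$ and thereby couples $c_\lambda$ to all $c_{\lambda'}$ with $\lambda'$ below $\lambda$, ultimately to $c_{\eps_i}=1$.

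Your route for (a) is genuinely different from the paper's. You invoke the Schreier--van der Waerden/Dieudonn\'e classification of abstract automorphisms of $\GL_n(\CC)$ and then use the $\Tn$-compatibility to kill the inverse-transpose branch. The paper instead works from the bottom up and never needs that classification: it restricts $\theta$ to the center $Z=\Cst\subset\GL_n$ and to $\Tn$, and from the single compatibility $\theta_1(c\cdot\bt)=\theta_0(c)\cdot\theta_1(\bt)$ reads off that $\tau:=\theta_0$ (extended by $\tau(0)=0$) is a field automorphism; after dividing out $\tau$, $\theta_1$ becomes linear, and conjugating by $\theta_1\in\GL_n$ reduces to $\theta|_{\Tn}=\Id$, which then forces $\theta|_{\GL_n}=\Id$ because the conjugation action of $\GL_n$ on $\Tn$ is faithful. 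This is shorter and more elementary. Your approach is viable, but be aware that the classification of abstract automorphisms of $\GL_n$ also contains ``radial'' automorphisms $A\mapsto\chi(\det A)\,A$ for arbitrary group endomorphisms $\chi$ of $\Cst$, which you have not mentioned; these too must be eliminated via the $\Tn$-compatibility before you can proceed.
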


\begin{proof}
(a)  It is enough to prove that $\theta(\f) = \g \circ \tau(\f) \circ \g^{-1}$
for some $\g \in \GL_n$ and some automorphism $\tau\colon \CC \to \CC$ 
of the field $\CC$.
Let $Z=\Cst \subseteq \GL_{n}$ be the center of $\GL_n$ 
and define $\theta_0 := \theta |_{Z}\colon Z \to Z$, 
$\theta_1 := \theta |_{\Tn}\colon \Tn \to \Tn$.
It follows that $\theta_0$ and $\theta_1$ are abstract group homomorphisms
of $\Cst$ and $\Tn$ respectively, and for all $c \in \Cst$,
$\bt \in \Tn$ we get
\[
\label{equat}\tag{$\ast$}
\theta_1( c \cdot \bt) = \theta_1(c \circ \bt \circ c^{-1}) = 
\theta_0(c) \circ \theta_1(\bt) \circ \theta_0(c)^{-1} =
\theta_0(c) \cdot \theta_1(\bt) \, ,
\]
where ``$\,\cdot \,$'' denotes scalar multiplication. We claim that
$\tau\colon \CC \to \CC$ defined by $\tau |_{\Cst} = \theta_0$, $\tau(0) = 0$, 
is an automorphism of the field $\CC$. Indeed, using \eqref{equat} one sees that
$\tau(a + b) = \tau(a) + \tau(b)$ for all $a, b \in \CC^\ast$ such that $a+b \neq 0$.
As $\theta_0(-1) = -1$ it follows that $\tau(-a) = -\tau(a)$ for all $a \in \CC^\ast$
and so $\tau(a + (- a)) = \tau(a) + \tau(-a)$. This implies the claim.

Thus we can assume that $\theta_0 = \id_{\Cst}$. Using \eqref{equat} again, it
follows that $\theta_1$ is linear.  Considering $\theta_1$ as an element
of $\GL_n$ we have 
$\theta_1( \bt ) = \theta_1 \circ \bt \circ \theta_1^{-1}$, and thus
we can assume that $\theta_1 = \id_{\Tn}$. But this implies that 
$\theta(\g) = \g$ for all $\g \in \GL_n$, because 
\[
\g\circ \bt \circ \g^{-1}= \theta(\g\circ \bt\circ \g^{-1})  
=  \theta(\g) \circ \bt \circ \theta(\g)^{-1} 
\]
for all $\bt\in\Tn$.
	
(b) Let $U\subset \Gn$ be a one-dimensional unipotent $D_n$-stable 
subgroup. We first claim that $\theta(U) = U$ and that $\theta|_{U}$ is linear. 
In fact, $U':=\theta(U)$ is a one-dimensional unipotent $D_{n}$-stable subgroup,
by Proposition~\ref{torusunipot.prop}(b), and the characters $\lambda$ and 
$\lambda'$ associated to $U$ and $U'$ (see Remark~\ref{characters.rem}) have the same kernel, because
\[
\label{eq}
\tag{$\ast\ast$}
\theta(\lambda(\bd) \cdot u)
= \theta(\bd \circ u \circ \bd^{-1})
= \bd \circ \theta(u) \circ \bd^{-1}
= \lambda'(\bd) \cdot \theta(u)
\text{ \ for $\bd \in D_n$, $u \in U$}.
\]
Hence $\lambda = \pm \lambda'$. If $\lambda = -\lambda'$,
then  $U \subseteq \GL_n$ and so $U' = U$, since
$\theta|_{\GL_{n}}=\Id_{\GL_{n}}$, hence a contradiction. 
Thus $\lambda = \lambda'$, and so $U=U'$ and (\ref{eq}) shows that 
$\theta |_U$ is linear, proving our claim.
	
As a consequence, 
$\theta|_{U_{\lambda}}= a_{\lambda}\Id_{U_{\lambda}}$ for all $\lambda\in\Xu$, with suitable $a_{\lambda}\in\Cst$.
If $\lambda_{i}=1$ put $\gamma_{i}:=0$ and $\gamma_{j}:= -\lambda_{j}$. Then $\f=(x_1, \ldots, x_i + x^\gamma, \ldots ,x_n)
\in U_{\lambda}$, see Lemma~\ref{stableunipot.lem}.
Conjugation with the translation $\bt\colon x \mapsto x - \sum_{j \neq i} e_j$  gives
\[
\bt \circ \f \circ \bt^{-1} = (x_1, \ldots , x_i + h_\gamma , \ldots, x_n) 
\text{ where }
h_\gamma := (x_1+1)^{\gamma_1} (x_2+1)^{\gamma_2} \cdots (x_n+1)^{\gamma_n}.
\]
Now we apply $\theta$ to get 
$\theta(\bt \circ \f \circ \bt^{-1}) = \bt \circ \theta(\f) \circ \bt^{-1}$. 
Since all the monomials $x^{\gamma'}$ with $\gamma' \leq \gamma$ appear in 
$h_\gamma$ it follows that the corresponding coefficients $a_{\lambda'}$ must all be equal.
In particular, $a_\lambda = a_{\eps_{i}} = 1$ since $U_{\eps_{i}}\subset \Tn$. 
This shows  that $\theta |_{\Jn} = \Id_{\Jn}$.
\end{proof}

\par\bigskip

\vskip1cm

\end{document}